\newtheorem{thm}{Theorem}
\newtheorem{cor}[thm]{Corollary}
\newtheorem{lem}[thm]{Lemma}
\newtheorem{remark}[thm]{Remark}
\numberwithin{thm}{section}
\numberwithin{equation}{section}
\title{Compound geometric approximation under a failure rate constraint}
\author{Fraser Daly\footnote{Department of Actuarial Mathematics and Statistics, Heriot-Watt University, Edinburgh EH14 4AS, UK.  E-mail: f.daly@hw.ac.uk;  Tel: +44 (0)131 451 3212; Fax: +44 (0)131 451 3249}}
\date{}
\begin{document}

\maketitle

\noindent{\bf Abstract} 
We consider compound geometric approximation for a nonnegative, integer-valued random variable $W$.  The bound we give is straightforward but relies on having a lower bound on the failure rate of $W$. Applications are presented to M/G/1 queuing systems, for which we state explicit bounds in approximations for the number of customers in the system and the number of customers served during a busy period.  Other applications are given to birth-death processes and Poisson processes.
\vspace{12pt}

\noindent{\bf Key words and phrases:} Compound geometric distribution; failure rate; hazard rate ordering; M/G/1 queue; birth-death process; Poisson process.

\vspace{12pt}

\noindent{\bf AMS 2010 subject classification:} 62E17; 60E15; 60J10; 62E10.

\section{Introduction and main result}\label{sec:intro}

We consider the approximation of a nonnegative, integer-valued random variable $W$ by a compound geometric distribution.  We say that $Y$ has a compound geometric distribution if it is equal in distribution to $\sum_{i=1}^NX_i$, where $X,X_1,X_2,\ldots$ are i$.$i$.$d$.$ and $N\sim\mbox{Geom}(p)$ has a geometric distribution with $\mathbb{P}(N=k)=p(1-p)^k$ for $k=0,1,2,\ldots$.  In this work we will only consider the case where $X$ takes values in $\mathbb{N}=\{1,2,\ldots\}$.  As usual, the empty sum is treated as zero, so that $\mathbb{P}(Y=0)=\mathbb{P}(N=0)=p$.

Such compound geometric distributions arise in a number of applications in a variety of fields, including reliability, queueing theory and risk theory: see \cite{k97} for an overview.  It is well-known that a compound geometric distribution converges to an exponential distribution as $p\rightarrow0$.  Explicit bounds in exponential approximation for compound geometric distributions have been given by Brown \cite{b90,b15}, Bon \cite{b06}, and Pek\"oz and R\"ollin \cite{pr11}.  Brown's work takes advantage of reliability properties of such compound geometric distributions; such properties will also prove useful in our work.  The bounds given by Pek\"oz and R\"ollin \cite{pr11} apply more generally than to compound geometric distributions, relaxing the assumptions that $N$ have a geometric distribution and that the $X_i$ be independent.  Pek\"oz, R\"ollin and Ross \cite{prr13} give bounds in the geometric approximation of compound geometric distributions.  Note that some of the above-mentioned bounds apply in the case where $N$ is supported on $\{0,1,\ldots\}$, and some in the case where $N$ has support $\{1,2,\ldots\}$.

Here we will consider the approximation of $W$ by our compound geometric random variable $Y$ using the total variation distance, defined by
$$
d_{TV}(\mathcal{L}(W),\mathcal{L}(Y))=\sup_{A\subseteq\mathbb{Z}^+}\left|\mathbb{P}(W\in A)-\mathbb{P}(Y\in A)\right|\,,
$$
where $\mathbb{Z}^+=\{0,1,2,\ldots\}$.  Some work on compound geometric approximation in total variation distance has been done by Daly \cite{d10}, whose main application is to hitting times of Markov chains in a quite general setting.  We build upon that work by presenting bounds which are more straightforward to evaluate, but which require some knowledge about the behaviour of the failure rate of the random variable $W$.  
We recall that the failure rate (or hazard rate) of a nonnnegative, integer-valued random variable $W$ is defined to be
$$
r_W(j)=\frac{\mathbb{P}(W=j)}{\mathbb{P}(W>j)}\,,\qquad j\in\mathbb{Z}^+\,.
$$ 
Some authors use an alternative definition, taking the failure rate of $W$ to be
$$
\widetilde{r}_W(j)=\frac{\mathbb{P}(W=j)}{\mathbb{P}(W\geq j)}\,,\qquad j\in\mathbb{Z}^+\,.
$$
The failure rate of a continuous random variable may be defined analogously, by replacing the mass function in the numerators of the above with a density function. 

Note that our bounds may be applied in conjunction with bounds for exponential or geometric approximation of compound geometric distributions discussed above.

In our main result, Theorem \ref{thm:main} below, we will assume that we have $\delta>0$ such that $r_W(j)\geq\delta$ for all $j$.  Given such a $\delta$, the total variation distance between $W$ and a compound geometric distribution may be effectively bounded by computing their expectations.  This is in contrast to the bounds presented in \cite{d10}, where more detailed information must be known about $W$ to allow them to be computed.

We note the work by Brown and Kemperman \cite{bk09} and Brown \cite{b14}, who find bounds on the distribution function and variance of a random variable, respectively, under bounds on its failure rate.  Explicit bounds in probability approximation for a random variable with a bounded failure rate have also been derived in the recent work of Brown \cite{b15}.  He gives sharp bounds in exponential approximation for random variables whose failure rate may be bounded from above, with applications to compound geometric distributions, and first passage times of birth-death processes and other reversible Markov chains (in continuous time).  Note that here we are working with discrete random variables, under the assumption of a lower bound on the failure rate.  Our results complement, but do not overlap with, Brown's work. 

After stating our main theorem, a first application (approximating the equilibrium number of customers in an M/G/1 queueing system) will be presented to illustrate our bound.  Further applications will be given in Sections \ref{sec:ifr} and \ref{sec:dfr}.  

In Section \ref{sec:ifr} we will consider the well-studied problem of geometric approximation for random variables with increasing failure rate.  We will consider two straightforward applications of our Theorem \ref{thm:main} (to Poisson processes and the P\'olya distribution) which allow us to explicitly compare our bound with a similar result from \cite{o81}.  In Section \ref{sec:dfr} we consider compound geometric approximation for random variables with decreasing failure rate.  In particular, we consider the number of customers served during a busy period of an M/G/1 queue, and the time to extinction of a discrete birth-death process.     

The proof of our Theorem \ref{thm:main} is given in Section \ref{sec:proof}.  The proof uses Stein's method (see \cite{bc05} and references therein), building upon previous work on Stein's method for geometric \cite{p96} and compound geometric \cite{d10} approximation.  Finally, in Section \ref{sec:hr} we give related results which we illustrate with short examples.

For future use, for any $0\leq p\leq1$ and positive, integer-valued random variable $X$ with $u=1-d_{TV}(\mathcal{L}(X),\mathcal{L}(X+1))$, we define
\begin{equation}\label{eq:steinfactor}
H_p(X)=\min\left\{p+(1-p)\mathbb{P}(X>1),p\left(1+\sqrt{\frac{-2}{u\log(1-p)}}\right)\right\}\,.
\end{equation}
We now state our main result.
\begin{thm}\label{thm:main}
Let $W$ be a nonnegative, integer-valued random variable with $\mathbb{P}(W=0)=p\in(0,1)$ and $r_W(j)\geq\delta>0$ for all $j$.  Let $Y=\sum_{i=1}^NX_i$, where $N\sim\mbox{Geom}(p)$ and $X,X_1,X_2,\ldots$ are i$.$i$.$d$.$ positive, integer-valued random variables.  If
$$
\mathbb{E}X\geq\frac{p}{(1-p)\delta}\,,
$$
then
$$
d_{TV}(\mathcal{L}(W),\mathcal{L}(Y))\leq H_p(X)\left(\mathbb{E}Y-\mathbb{E}W\right)\,.
$$
\end{thm}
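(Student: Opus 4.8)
The plan is to use Stein's method for compound geometric approximation, following the framework of \cite{d10}. The first step is to set up the Stein equation: for a test function $f_A$ associated with a set $A\subseteq\mathbb{Z}^+$, there is a bounded solution $g=g_A$ to an equation of the form
$$
\mathbb{P}(Y\in A)-\mathbb{1}[w\in A]=(1-p)\mathbb{E}[g(w+X)]-g(w)+p g(0)\mathbb{1}[w>0]
$$
(or some equivalent rearrangement), characterising the compound geometric law of $Y$. Taking expectations over $w=W$ and using the definition of $d_{TV}$, we get
$$
d_{TV}(\mathcal{L}(W),\mathcal{L}(Y))\leq\sup_A\left|\mathbb{E}\bigl[(1-p)\mathbb{E}'[g_A(W+X)]-g_A(W)+p g_A(0)\mathbb{1}[W>0]\bigr]\right|,
$$
where $\mathbb{E}'$ is expectation over an independent copy of $X$. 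The key analytic input, which I would import from the geometric/compound geometric Stein literature, is a bound on the first difference $\Delta g_A(j)=g_A(j+1)-g_A(j)$; this is precisely where the Stein factor $H_p(X)$ in \eqref{eq:steinfactor} enters, with the two terms in the minimum coming respectively from a crude bound and from a smoothing/concentration argument using $u=1-d_{TV}(\mathcal{L}(X),\mathcal{L}(X+1))$ (the $\sqrt{-2/(u\log(1-p))}$ factor is the signature of such a concentration bound, cf.\ the geometric case in \cite{p96}).

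The second, and crucial, step is to exploit the failure rate hypothesis $r_W(j)\geq\delta$. The idea is that the assumption $\mathbb{P}(W=0)=p$ together with $r_W(j)\geq\delta$ lets one compare $W$ in the hazard rate order with a geometric random variable $G$ satisfying $\mathbb{P}(G=0)=p$ and failure rate exactly $\delta$ from level $1$ onwards — or more precisely, it controls the quantity $\mathbb{E}[\mathbb{1}[W>0]\,h(W)]$ versus $(1-p)\mathbb{E}[h(W+1)]$ for monotone $h$. Concretely, I expect the expression inside the supremum to reduce, after writing $g_A$ in terms of its increments and using the Stein equation structure, to something like a constant times $\mathbb{E}[\,\Delta g_A\cdot(\text{a nonnegative functional of }W)\,]$, and the failure rate bound guarantees that this functional has the right sign and is controlled by $\mathbb{E}Y-\mathbb{E}W$. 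The telescoping identity $\mathbb{E}Y-\mathbb{E}W=\sum_{j\geq0}\bigl(\mathbb{P}(Y>j)-\mathbb{P}(W>j)\bigr)$ and the analogous representation of $\mathbb{E}Y$ via $\mathbb{E}Y=\tfrac{1-p}{p}\mathbb{E}X$ will be used to identify the remainder with $\mathbb{E}Y-\mathbb{E}W$; the hypothesis $\mathbb{E}X\geq p/((1-p)\delta)$ is exactly what makes $\mathbb{E}Y=\tfrac{1-p}{p}\mathbb{E}X\geq\tfrac{1}{\delta}\geq\mathbb{E}W$ (since $r_W\geq\delta$ forces $\mathbb{E}W=\sum_j\mathbb{P}(W>j)\leq\sum_j(1-\delta)^{j+1}\cdot$something $\leq1/\delta$... more carefully, $\mathbb{P}(W>j)\leq(1-\delta)\mathbb{P}(W>j-1)$ gives $\mathbb{E}W\leq(1-p)/\delta\leq1/\delta$), so that the final bound is a genuine (nonnegative) bound.

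The main obstacle I anticipate is the second step: transferring the failure rate lower bound into a usable inequality inside the Stein expectation. The difficulty is that $r_W(j)\geq\delta$ is a pointwise statement about ratios of tail probabilities, whereas what the Stein argument produces is an averaged quantity involving the (sign-definite but not monotone in an obvious way) increments of $g_A$. The resolution I would pursue is an Abel summation / integration-by-parts on $j$: write $\mathbb{E}[\text{stuff}]=\sum_j(\cdots)\mathbb{P}(W>j)$, rewrite using $\mathbb{P}(W=j)=r_W(j)\mathbb{P}(W>j)\geq\delta\mathbb{P}(W>j)$ wherever a term of definite sign multiplies $\mathbb{P}(W=j)$, and bound the leftover by the corresponding compound geometric tail sum, which reassembles into $\mathbb{E}Y-\mathbb{E}W$. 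Keeping track of signs (ensuring the direction of the failure rate inequality matches the sign of $\Delta g_A$, possibly after using that $g_A$ can be taken monotone on $A$ and on $A^c$) is the delicate bookkeeping, and is where I expect to spend most of the effort. Everything else — the Stein equation, the bound on $\Delta g_A$ giving $H_p(X)$, and the arithmetic identity for $\mathbb{E}Y$ — is standard or routine.
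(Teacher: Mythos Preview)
Your outline has the right scaffolding --- Stein equation, the bound $|\Delta f_A|\leq p^{-1}H_p(X)$, and then the failure rate hypothesis to control the remainder --- but the step you flag as the main obstacle is exactly where you are missing the paper's key idea, and your proposed workaround (monotonicity of the Stein solution on $A$ and $A^c$, or a bare Abel summation against tails) will not produce the sign-definiteness you need.

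The paper's device is to introduce an auxiliary random variable $V$ defined by $V+X=_{st}(W\mid W>0)$. With $f_A(0)=0$ and $\mathbb{P}(W=0)=p$, taking expectations in the Stein equation gives exactly
\[
\mathbb{P}(W\in A)-\mathbb{P}(Y\in A)=(1-p)\,\mathbb{E}\bigl[f_A(W+X)-f_A(V+X)\bigr]
=(1-p)\sum_{j\geq0}\bigl[f_A(j+1)-f_A(j)\bigr]\bigl[\mathbb{P}(W+X>j)-\mathbb{P}(V+X>j)\bigr].
\]
The crucial lemma is then a stochastic ordering: under $r_W\geq\delta$ and $\mathbb{E}X\geq p/((1-p)\delta)$ one has $V+X\leq_{st}W+X$, so every bracket in the sum is nonnegative. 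The proof of that ordering is the short computation where the failure rate actually enters: the desired inequality $(1-p)\mathbb{P}(W+X>j)\geq\mathbb{P}(W>j)$ rearranges to
\[
\frac{1}{\mathbb{P}(W>j)}\,\mathbb{E}\Bigl[\sum_{k=j+1-X}^{j}\mathbb{P}(W=k)\Bigr]\geq\frac{p}{1-p},
\]
and since $\mathbb{P}(W=k)/\mathbb{P}(W>j)\geq\mathbb{P}(W=k)/\mathbb{P}(W>k)=r_W(k)\geq\delta$ for $k\leq j$, the left side is at least $\delta\,\mathbb{E}X$. With the brackets nonnegative, one bounds $|f_A(j+1)-f_A(j)|\leq p^{-1}H_p(X)$, sums to get $\tfrac{1-p}{p}H_p(X)\,\mathbb{E}[W-V]$, and finally uses the definition of $V$ to rewrite $\tfrac{1-p}{p}\mathbb{E}[W-V]=\mathbb{E}Y-\mathbb{E}W$.

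So the ``nonnegative functional of $W$'' you were hoping for is concretely $\mathbb{P}(W+X>j)-\mathbb{P}(V+X>j)$, and the failure rate lower bound together with the condition on $\mathbb{E}X$ is used \emph{solely} to establish the stochastic ordering $V+X\leq_{st}W+X$, not via a hazard-rate comparison with a geometric variable. Once you have the $V$-construction and that ordering lemma, there is no sign bookkeeping left to do.
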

Note that under the conditions of this theorem, it is straightforward to show that $\mathbb{E}W\leq\delta^{-1}$, so that the resulting upper bound is nonnegative, as expected.

\subsection{Application to the number of customers in an M/G/1 queue}\label{sec:queue}

Consider an M/G/1 queueing system in equilibrium, with customers arriving at rate $\lambda$ and with i$.$i$.$d$.$ service times having the same distribution as the random variable $S$.  Letting $\rho=\lambda\mathbb{E}[S]$, we assume throughout that $\rho<1$.  
Let $W$ be the number of customers in the system.  It is well-known that
$$
\mathbb{P}(W=0)=1-\rho\,,\qquad\mbox{and}\qquad
\mathbb{E}W=\rho+\frac{\rho^2\mathbb{E}[S^2]}{2(1-\rho)(\mathbb{E}S)^2}\,.
$$ 
See, for example, page 281 of \cite{a03}.

Let $R_j$ denote the residual service time of the customer currently being served in the queue, conditional on the event $\{W=j\}$.  Then Ross \cite{r06} shows that
$$
r_W(j)=\frac{1-\rho}{\lambda\mathbb{E}R_j}\,,\qquad\mbox{ and }\qquad\mathbb{E}R_j\leq\sup_{t\in\mathbb{R}^+}\mathbb{E}\left[S-t|S\geq t\right]\,.
$$ 
We may thus apply Theorem \ref{thm:main} with the choice
$$
\delta=\frac{1-\rho}{\lambda\sup_{t\in\mathbb{R}^+}\mathbb{E}\left[S-t|S\geq t\right]}\,.
$$
The random variable $S$ is said to be new better than used in expectation (NBUE) if we have that $\mathbb{E}\left[S-t|S\geq t\right]\leq\mathbb{E}S$ for all $t\geq0$.  In this case we may take $\delta=\rho^{-1}(1-\rho)$ and, in the notation of Theorem \ref{thm:main}, we may then take $X=1$ a$.$s$.$, so that $Y$ simply has a geometric distribution and $H_p(X)=p$.  We thus obtain the following.
\begin{cor}\label{prop:q1}
Let $W$ be the number of customers in an M/G/1 queueing system in equilibrium as above.  If $S$ is NBUE then
$$
d_{TV}(\mathcal{L}(W),\mbox{Geom}(1-\rho))\leq\rho^2\left(1-\frac{\mathbb{E}[S^2]}{2(\mathbb{E}S)^2}\right)\,.
$$
\end{cor}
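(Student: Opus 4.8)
The plan is to apply Theorem~\ref{thm:main} directly with the data of the M/G/1 queue, as the discussion preceding the corollary has already set up. Since $S$ is NBUE, we have $\mathbb{E}[S-t\mid S\geq t]\leq\mathbb{E}S$ for all $t\geq0$, so $\sup_{t\in\mathbb{R}^+}\mathbb{E}[S-t\mid S\geq t]\leq\mathbb{E}S$, and hence the choice $\delta=\rho^{-1}(1-\rho)$ satisfies $r_W(j)=\frac{1-\rho}{\lambda\mathbb{E}R_j}\geq\frac{1-\rho}{\lambda\mathbb{E}S}=\rho^{-1}(1-\rho)=\delta$ for all $j$, using Ross's identity and bound quoted above. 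Thus the failure-rate hypothesis of Theorem~\ref{thm:main} holds with this $\delta$, and $p=\mathbb{P}(W=0)=1-\rho$.

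Next I would verify the mean hypothesis. Taking $X=1$ almost surely (so $\mathbb{E}X=1$), the condition $\mathbb{E}X\geq\frac{p}{(1-p)\delta}$ becomes $1\geq\frac{1-\rho}{\rho\cdot\rho^{-1}(1-\rho)}=1$, which holds (with equality). So Theorem~\ref{thm:main} applies. With $X\equiv1$, the random variable $Y=\sum_{i=1}^N X_i=N\sim\mathrm{Geom}(1-\rho)$, and since $d_{TV}(\mathcal{L}(X),\mathcal{L}(X+1))=d_{TV}(\delta_1,\delta_2)=1$ we get $u=0$; inspecting the definition \eqref{eq:steinfactor}, the first term in the minimum is $p+(1-p)\mathbb{P}(X>1)=p+0=p$, and the second term is $p(1+\sqrt{-2/(u\log(1-p))})$, which with $u=0$ should be read as $+\infty$, so $H_p(X)=p=1-\rho$.

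Then the conclusion of Theorem~\ref{thm:main} reads
$$
d_{TV}(\mathcal{L}(W),\mathrm{Geom}(1-\rho))\leq(1-\rho)\bigl(\mathbb{E}Y-\mathbb{E}W\bigr).
$$
It remains to compute $\mathbb{E}Y-\mathbb{E}W$. Since $N\sim\mathrm{Geom}(1-\rho)$ with $\mathbb{P}(N=k)=(1-\rho)\rho^k$, we have $\mathbb{E}Y=\mathbb{E}N=\frac{\rho}{1-\rho}$. Subtracting the quoted value $\mathbb{E}W=\rho+\frac{\rho^2\mathbb{E}[S^2]}{2(1-\rho)(\mathbb{E}S)^2}$ and simplifying (the $\rho$-terms combine as $\frac{\rho}{1-\rho}-\rho=\frac{\rho^2}{1-\rho}$) gives
$$
\mathbb{E}Y-\mathbb{E}W=\frac{\rho^2}{1-\rho}\left(1-\frac{\mathbb{E}[S^2]}{2(\mathbb{E}S)^2}\right),
$$
and multiplying by $H_p(X)=1-\rho$ yields the stated bound $\rho^2\bigl(1-\frac{\mathbb{E}[S^2]}{2(\mathbb{E}S)^2}\bigr)$.

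There is no real obstacle here: the corollary is essentially a direct specialization of Theorem~\ref{thm:main}. The one point requiring a little care is the degenerate case $u=0$ in the Stein factor $H_p(X)$ — one should check that the convention there makes $H_p(X)=p$ rather than causing the bound to be vacuous, and indeed the finite first term $p+(1-p)\mathbb{P}(X>1)=p$ in the minimum secures this. Everything else is the arithmetic of combining the known formulas for $\mathbb{P}(W=0)$, $\mathbb{E}W$, and $\mathbb{E}Y$, together with the NBUE property to validate the choice of $\delta$.
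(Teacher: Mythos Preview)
Your proof is correct and follows exactly the paper's approach: the discussion preceding the corollary already specifies $\delta=\rho^{-1}(1-\rho)$ under the NBUE assumption and $X\equiv1$ so that $H_p(X)=p=1-\rho$, and you have carried out the remaining arithmetic with $\mathbb{E}Y=\rho/(1-\rho)$ and the quoted $\mathbb{E}W$ correctly. Your remark on the degenerate $u=0$ case in $H_p(X)$ is a sensible clarification but not strictly needed, since the first term of the minimum in \eqref{eq:steinfactor} already gives $p$.
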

Note that, as expected, this upper bound is zero if the service time $S$ has an exponential distribution (which is indeed NBUE).

Finally in this section, we refer the interested reader to the book by M\"uller and Stoyan \cite{ms02}, who prove many stochastic comparison and monotonicity results for queueing models (and in many other applications), and derive associated bounds on quantities such as the mean waiting time and mean busy period for stationary queues.  Some of their work also takes advantage of reliability properties of the underlying random variables, as we have done here. 

\section{Geometric approximation for IFR distributions}\label{sec:ifr}

In the notation of Theorem \ref{thm:main}, since $p=\mathbb{P}(W=0)$ we have that $r_W(0)=p(1-p)^{-1}$.  If the failure rate $r_W(j)$ is increasing in $j$, this may clearly serve as the lower bound $\delta$.  In this case, we may let the random variable $X$ be 1 almost surely, so that $Y$ has a geometric distribution. Noting that $H_p(X)=p$ in this case, we obtain the following.
\begin{cor}\label{cor:ifr}
Let $W$ be a nonnegative, integer-valued random variable with $\mathbb{P}(W=0)=p$.  If $W$ has increasing failure rate (IFR) then
$$
d_{TV}(\mathcal{L}(W),\mbox{Geom}(p))\leq 1-p(1+\mathbb{E}W)\,.
$$
\end{cor}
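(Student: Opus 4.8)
The plan is to deduce this directly from Theorem \ref{thm:main} with the choices $\delta = p(1-p)^{-1}$ and $X \equiv 1$. First I would note that since $\mathbb{P}(W=0)=p$, the failure rate of $W$ at the origin is $r_W(0) = \mathbb{P}(W=0)/\mathbb{P}(W>0) = p(1-p)^{-1}$. The IFR assumption says that $j \mapsto r_W(j)$ is nondecreasing, so $r_W(j) \geq r_W(0) = p(1-p)^{-1}$ for every $j \in \mathbb{Z}^+$; hence $\delta = p(1-p)^{-1}$ is a valid lower bound on the failure rate in the sense required by Theorem \ref{thm:main}.

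Next I would take $X = 1$ almost surely, so that $Y = \sum_{i=1}^N X_i = N \sim \mathrm{Geom}(p)$ is precisely the approximating distribution in the statement. Before invoking the theorem I would check its hypothesis: with the above $\delta$ one has $\frac{p}{(1-p)\delta} = 1 = \mathbb{E}X$, so the required inequality $\mathbb{E}X \geq \frac{p}{(1-p)\delta}$ holds (with equality).

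It then remains to evaluate the two ingredients of the conclusion. Since $X=1$ a.s. we have $\mathbb{P}(X>1)=0$, and the quantity $u = 1 - d_{TV}(\mathcal{L}(X),\mathcal{L}(X+1))$ equals $0$ (the laws of $1$ and $2$ being mutually singular); reading off the definition \eqref{eq:steinfactor}, the second term in the minimum is $+\infty$ and the first is $p + (1-p)\mathbb{P}(X>1) = p$, so $H_p(X)=p$. Finally, for $N \sim \mathrm{Geom}(p)$ with $\mathbb{P}(N=k)=p(1-p)^k$ one has $\mathbb{E}Y = \mathbb{E}N = (1-p)p^{-1}$. Substituting into the bound of Theorem \ref{thm:main} gives $d_{TV}(\mathcal{L}(W),\mathrm{Geom}(p)) \leq p\big((1-p)p^{-1} - \mathbb{E}W\big) = (1-p) - p\,\mathbb{E}W = 1 - p(1+\mathbb{E}W)$, as required.

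There is no genuine obstacle here: all the analytic work sits inside Theorem \ref{thm:main}, and the corollary is a matter of choosing $\delta$ and $X$ and simplifying. The only points that need a moment's care are verifying that $r_W$ is anchored at $r_W(0)=p(1-p)^{-1}$ so that the IFR hypothesis yields the correct $\delta$, confirming $H_p(1)=p$ from the (degenerate) definition of $H_p$, and recalling the mean $(1-p)p^{-1}$ of the geometric law under the paper's parametrization. One can also observe that the resulting bound is nonnegative, consistent with the remark following Theorem \ref{thm:main}, since $\mathbb{E}W \leq \delta^{-1} = (1-p)p^{-1}$.
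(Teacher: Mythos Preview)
Your proposal is correct and follows exactly the approach the paper takes: the paper observes that $r_W(0)=p(1-p)^{-1}$ serves as the lower bound $\delta$ under IFR, then applies Theorem~\ref{thm:main} with $X\equiv 1$ and $H_p(X)=p$. Your write-up simply fills in the arithmetic the paper leaves implicit.
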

Note that we do not need the monotonicity of $r_W$ to obtain such a bound; it suffices to have $r_W(j)\geq r_W(0)$ for all $j\in\mathbb{Z}^+$.
  
Geometric approximation theorems for IFR random variables are well-known.  We use the remainder of this section to give two explicit examples in which we can compare our Corollary \ref{cor:ifr} to the main theorem of Obretenov \cite{o81}.  Obretenov does not use total variation distance $d_{TV}$, but employs the Kolmogorov distance $d_K$ defined by
$$
d_K(\mathcal{L}(W),\mathcal{L}(Y))=\sup_{j\in\mathbb{Z}^+}\left|\mathbb{P}(W\leq j)-\mathbb{P}(Y\leq j)\right|\,.
$$
Since total variation distance is stronger than Kolmogorov distance, Corollary \ref{cor:ifr} also bounds the Kolmogorov distance between $W$ and our geometric distribution, and thus Obretenov's bound may be compared with ours.

Obretenov \cite{o81} shows that if $W$ is a nonnegative, integer-valued, IFR random variable with $\mathbb{E}W=\mu$ then
\begin{equation}\label{eq:o}
d_K\left(\mathcal{L}(W),\mbox{Geom}\left((1+\mu)^{-1}\right)\right)\leq\frac{\mu}{1+\mu}\left(1-\frac{\mbox{Var}(W)}{\mu(1+\mu)}\right)\,.
\end{equation}
Note that Obretenov chooses a geometric distribution having the same expectation as $W$, while we have chosen ours to have the same probability of being zero.

\subsection{Application to the P\'olya distribution}

Suppose $m$ balls are distributed randomly among $d\geq2$ urns, in such a way that all assignments are equally likely.  Let $W$ count the number of balls in the first urn.  Then $W\sim\mbox{Pya}(m,d)$ has a P\'olya distribution, with
$$
\mathbb{P}(W=k)=\frac{\binom{d+m-k-2}{m-k}}{\binom{d+m-1}{m}}\,,\qquad 0\leq k\leq m\,.
$$ 
It is straightforward to show that, with this definition 
$$
\mathbb{P}(W=k)^2\geq\mathbb{P}(W=k-1)\mathbb{P}(W=k+1)\\,
$$
for $1\leq k\leq m$.  Hence, $W$ is IFR. See, for example, page 177 of \cite{o81}.
We may thus apply Corollary \ref{cor:ifr}, which gives
\begin{equation}\label{eq:pold}
d_{TV}\left(\mathcal{L}(W),\mathcal{L}(Y)\right)\leq\frac{m}{d(d+m-1)}\,,
\end{equation}
where $Y\sim\mbox{Geom}\left(\frac{d-1}{d+m-1}\right)$.  In this case, Obretenov's bound (\ref{eq:o}) is
\begin{equation}\label{eq:polo}
d_{K}\left(\mathcal{L}(W),\mbox{Geom}\left(\frac{d}{d+m}\right)\right)\leq\frac{2m}{(d+1)(d+m)}\,.
\end{equation}
Our bound  is better than (\ref{eq:polo}) for $d$ large enough (specifically, $d^2+dm-3d-m>0$).  Note, however, that (\ref{eq:pold}) bounds the total variation distance, while (\ref{eq:polo}) bounds only the weaker Kolmogorov distance.  Our (\ref{eq:pold}) also improves upon bounds for geometric approximation of the P\'olya distribution in Example 3.1 of \cite{d10} and   Section 4 of \cite{pw00}.

A simple lower bound corresponding to (\ref{eq:pold}) is given by
\begin{equation}\label{eq:poll}
d_{TV}(\mathcal{L}(W),\mathcal{L}(Y))\geq|\mathbb{P}(W=1)-\mathbb{P}(Y=1)|=\frac{m(d-1)}{(d+m-2)(d+m-1)^2}\,.
\end{equation} 
In the case where $d$ is of order $O(m)$, this lower bound is of the same order as each of the upper bounds (\ref{eq:pold}) and (\ref{eq:polo}).  Some numerical comparison of the bounds (\ref{eq:pold})--(\ref{eq:poll}) is given in Table \ref{tab:polya}.
\begin{table}
\caption{Geometric approximation for $W\sim\mbox{Pya}(m,d)$.  Values of $p=(d-1)(d+m-1)^{-1}$ given to 4 d$.$p$.$; total variation distance between $W$ and $Y\sim\mbox{Geom}(p)$, and bounds (\ref{eq:pold})--(\ref{eq:poll}) given to 4 s$.$f.} 
\label{tab:polya}
\begin{center}
\begin{tabular}{ccccccc}
\hline
$m$ & $d$ & $p$ & $d_{TV}(\mathcal{L}(W),\mathcal{L}(Y))$ & (\ref{eq:pold}) & (\ref{eq:polo}) & (\ref{eq:poll})\\
\hline
200 & 200 & 0.4987 & 0.0009458 & 0.002506 & 0.004975 & 0.0006281\\
200 & 10 & 0.0431 & 0.03055 & 0.09569 & 0.1732 & 0.0001981\\
10 & 10 & 0.4737 & 0.02255 & 0.05263 & 0.09091 & 0.01385\\
10 & 200 & 0.9522 & 0.0002190 & 0.0002392 & 0.0004738 & 0.0002190\\
\hline
\end{tabular}
\end{center}
\end{table}

\subsection{Application to Poisson processes}\label{subsec:pp}

Let $\{N(t):t\geq0\}$ be a homogeneous Poisson process of rate $\lambda$ and let $T$ be an IFR random variable independent of $\{N(t):t\geq0\}$.  By Corollary 5.2 of \cite{rsz05}, $N(T)$ is also IFR.  Since $\mathbb{P}(N(T)=0)=\mathbb{E}e^{-\lambda T}$, $\mathbb{E}N(T)=\lambda\mathbb{E}T$, and $\mbox{Var}(N(T))=\lambda\mathbb{E}T+\lambda^2\mbox{Var}(T)$, we have from our Corollary \ref{cor:ifr} that
\begin{equation}\label{eq:ppd}
d_{TV}\left(\mathcal{L}(N(T)),\mbox{Geom}\left(\mathbb{E}e^{-\lambda T}\right)\right)\leq 1-(\mathbb{E}e^{-\lambda T})\left(1+\lambda\mathbb{E}T\right)\,,
\end{equation}
while Obretenov's result (\ref{eq:o}) gives
\begin{equation}\label{eq:ppo}
d_{K}\left(\mathcal{L}(N(T)),\mbox{Geom}\left((1+\lambda\mathbb{E}T)^{-1}\right)\right)\leq\frac{\lambda\mathbb{E}T}{1+\lambda\mathbb{E}T}\left(1-\frac{\mathbb{E}T+\lambda\mbox{Var}(T)}{\mathbb{E}T(1+\lambda\mathbb{E}T)}\right)\,.
\end{equation}
To give an explicit example where we may compare these bounds, suppose that $T\sim\Gamma(\alpha,\beta)$ has a gamma distribution with density function $\phi(x)$ proportional to $x^{\alpha-1}e^{-\beta x}$ for some $\alpha>1$ and $\beta>0$.  Then $T$ is IFR. 
Since
$$
\mathbb{E}e^{-\lambda T}=\left(1+\frac{\lambda}{\beta}\right)^{-\alpha}\,,\qquad\mathbb{E}T=\frac{\alpha}{\beta}\,,\qquad\mbox{Var}(T)=\frac{\alpha}{\beta^2}\,,
$$
the bounds (\ref{eq:ppd}) and (\ref{eq:ppo}) become, respectively,
\begin{equation}\label{eq:gammad}
d_{TV}\left(\mathcal{L}(N(T)),\mbox{Geom}\left(\left(1+\frac{\lambda}{\beta}\right)^{-\alpha}\right)\right)\leq 1-\left(1+\frac{\lambda}{\beta}\right)^{-\alpha}\left(1+\frac{\alpha\lambda}{\beta}\right)\,,
\end{equation}
and
\begin{equation}\label{eq:gammao}
d_{K}\left(\mathcal{L}(N(T)),\mbox{Geom}\left(\frac{\beta}{\beta+\alpha\lambda}\right)\right)\leq\frac{\alpha(\alpha-1)\lambda^2}{(\beta+\alpha\lambda)^2}\,.
\end{equation}

To compare (\ref{eq:gammad}) and (\ref{eq:gammao}), we use Taylor's theorem to note that for small $\lambda$ the upper bound of (\ref{eq:gammad}) is approximately equal to
$\alpha(\alpha-1)\lambda^2\left(2\beta^2\right)^{-1}$,
which is smaller than the upper bound of (\ref{eq:gammao}) whenever $(\sqrt{2}-1)\beta>\alpha\lambda$.  Finally, we again emphasise that (\ref{eq:gammad}) bounds the total variation distance, while (\ref{eq:gammao}) bounds only the weaker Kolmogorov distance.

We return to further applications of our results to Poisson processes in Section \ref{sec:hr}. 

\section{Approximation for DFR distributions}\label{sec:dfr}

In this section we present some further applications of our main result, Theorem \ref{thm:main}.  We will consider random variables which have the decreasing failure rate (DFR) property, so that the lower bound $\delta$ may be taken to be $\lim_{j\rightarrow\infty}r_W(j)$.  The applications we will consider will be to the number of customers served in a busy period of an M/G/1 queue, and to the time to extinction of a discrete birth-death process.  In each case we will construct the relevant random variable $W$ as the time at which a particular Markov chain on $\mathbb{Z}^+$ first visits the origin.  In this case, Shanthikumar \cite{s88} gives sufficient conditions for the DFR property to hold and an expression for the failure rate which will allow us to apply our Theorem \ref{thm:main}.

Let $\{Z_n:n\geq-1\}$ be a discrete-time Markov chain with state space $\mathbb{Z}^+$ and transition matrix $P=(p_{ij})$.  Let the entries of the matrix $P^+=(p^+_{ij})$ be given by $p^+_{ij}=\sum_{k=j}^\infty p_{ik}$ for $i,j\in\mathbb{Z}^+$.  Assume that the Markov chain starts at $Z_{-1}=1$ and define the hitting time
\begin{equation}\label{eq:hit}
W=\min\{n\geq0:Z_n=0\}\,.
\end{equation}
Without loss of generality in what follows, we may assume that the state $0$ is absorbing.  We have chosen to start our Markov chain at time $-1$ so that the support of $W$ matches that of our compound geometric distributions.

We say that the matrix $P^+$ is $\mbox{TP}_2$ if $p^+_{ik}p^+_{jl}\geq p^+_{il}p^+_{jk}$ for all $i<j$ and $k<l$.  Theorem 3.1 of \cite{s88} states that if $P^+$ is $\mbox{TP}_2$ then $W$ is DFR.  From the proof of that theorem, we also have that for such DFR hitting times $W$, $\widetilde{r}_W(j)\geq\widetilde{\delta}$ for all $j\in\mathbb{Z}^+$, where
\begin{equation}\label{eq:delta}
\widetilde{\delta}=\sum_{i=1}^\infty p_{i0}\lim_{j\rightarrow\infty}\mathbb{P}(Z_j=i|Z_j\geq1)\,.
\end{equation}
In order to evaluate this expression, we will therefore need an expression for the limiting distribution of our Markov chain conditional on non-absorption.

To translate a lower bound on $\widetilde{r}_W(j)$ into a lower bound on $r_W(j)$, we will use the following lemma, whose proof is straightforward and therefore omitted.
\begin{lem}\label{lem:delta}
Let $W$ be a nonnegative, integer-valued random variable with $\widetilde{r}_W(j)\geq\widetilde{\delta}$ for all $j\in\mathbb{Z}^+$.  Then $r_W(j)\geq\widetilde{\delta}\left(1-\widetilde{\delta}\right)^{-1}$ for all $j\in\mathbb{Z}^+$. 
\end{lem}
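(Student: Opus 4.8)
The plan is to relate the two failure rates pointwise and then use the hypothesis. First I would write both quantities in terms of the tail probabilities. Setting $\overline{F}(j) = \mathbb{P}(W > j)$ and $\overline{F}(j-1) = \mathbb{P}(W \geq j)$, we have $\mathbb{P}(W = j) = \overline{F}(j-1) - \overline{F}(j)$, so that
$$
r_W(j) = \frac{\overline{F}(j-1) - \overline{F}(j)}{\overline{F}(j)}\,, \qquad \widetilde{r}_W(j) = \frac{\overline{F}(j-1) - \overline{F}(j)}{\overline{F}(j-1)}\,.
$$
From these expressions one immediately gets the algebraic identity $r_W(j) = \widetilde{r}_W(j) / (1 - \widetilde{r}_W(j))$, valid whenever $\overline{F}(j) > 0$ (and if $\overline{F}(j) = 0$ the bound $r_W(j) \geq \widetilde{\delta}(1-\widetilde{\delta})^{-1}$ is interpreted as $r_W(j) = \infty \geq$ anything, or one simply notes $\widetilde{r}_W(j)=1$ there).

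The second step is to observe that the map $t \mapsto t/(1-t)$ is strictly increasing on $[0,1)$. Since $\widetilde{r}_W(j) \geq \widetilde{\delta}$ by hypothesis and $\widetilde{\delta} < 1$ (note $\widetilde{r}_W(j) \leq 1$ always, and one needs $\widetilde{\delta}<1$ for the claimed bound even to make sense), monotonicity gives
$$
r_W(j) = \frac{\widetilde{r}_W(j)}{1 - \widetilde{r}_W(j)} \geq \frac{\widetilde{\delta}}{1 - \widetilde{\delta}}
$$
for every $j \in \mathbb{Z}^+$, which is exactly the assertion.

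There is really no main obstacle here — the result is a one-line consequence of the pointwise identity between $r_W$ and $\widetilde{r}_W$ together with the monotonicity of $t \mapsto t/(1-t)$; this is presumably why the authors call the proof ``straightforward'' and omit it. The only points requiring a word of care are the boundary cases where $\overline{F}(j) = 0$ (equivalently $\widetilde{r}_W(j) = 1$), and the implicit standing assumption that $\widetilde{\delta} < 1$, without which the right-hand side would be undefined or negative.
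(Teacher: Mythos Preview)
Your argument is correct and is exactly the natural one: the pointwise identity $r_W(j)=\widetilde{r}_W(j)/(1-\widetilde{r}_W(j))$ together with the monotonicity of $t\mapsto t/(1-t)$ on $[0,1)$ gives the claim immediately. The paper omits the proof as ``straightforward,'' and what you have written is precisely the intended straightforward verification.
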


\subsection{Customers served during a busy period of an M/G/1 queue}

Consider the M/G/1 queue of Section \ref{sec:queue}, with customers arriving at rate $\lambda$ and i$.$i$.$d$.$ service times with the same distribution as $S$.  Again letting $\rho=\lambda\mathbb{E}S$, we will assume throughout that $\rho<1$.  We will also assume that $S$ is IFR, so that, by Shanthikumar's \cite{s88} Theorem 5.1, the number of customers served during a busy period is DFR. 

Consider the embedded Markov chain $\{Z_n:n\geq-1\}$, where $Z_{-1}=1$ and $Z_n$ represents the number of customers in the system after the departure of customer $n$ (with customers labelled $0,1,2,\ldots$).  Then $W+1$, with the hitting time $W$ given by (\ref{eq:hit}), is the number of customers served during a busy period of the queue.

This Markov chain has the transition probabilities $p_{00}=1$ and $p_{ij}=g(j+1-i)$, where
\begin{displaymath}
g(k)=\left\{ \begin{array}{cl}
\frac{1}{k!}\mathbb{E}\left[e^{-\lambda S}(\lambda S)^k\right] & \textrm{if } k\geq0\,,\\
0 & \textrm{if }k<0\,.
\end{array} \right.
\end{displaymath}
Hence, 
\begin{equation}\label{eq:p}
p=\mathbb{P}(W=0)=p_{10}=\mathbb{E}e^{-\lambda S}\,.
\end{equation}
We also have that $\mathbb{E}W=\rho(1-\rho)^{-1}$.  See, for example, page 217  of \cite{k75}.

Since $p_{i0}=0$ for $i>1$, the lower bound $\widetilde{\delta}$ given by (\ref{eq:delta}) becomes $\widetilde{\delta}=p\theta$, where $\theta=\lim_{j\rightarrow\infty}\mathbb{P}(Z_j=1|Z_j\geq1)$.  To find an expression for $\theta$ we use a formula due to Kyprianou \cite{k72}.  Suppose the density of the service time $S$ has Laplace transform $\varphi$, and let $\xi$ be the real solution of $1+\lambda\varphi^\prime(s)=0$ nearest the origin.  By a result on page 829 of \cite{k72}, we then have that
\begin{equation}\label{eq:theta}
\theta=\frac{\xi-\lambda+\lambda\varphi(\xi)}{(\xi-\lambda)\varphi(\lambda)}\,.
\end{equation}
Using Lemma \ref{lem:delta}, we may then take the lower bound $\delta=p\theta(1-p\theta)^{-1}$ in Theorem \ref{thm:main} and we obtain the following.
\begin{thm}\label{thm:queue}
Let $W+1$ be the number of customers served in a busy period of an M/G/1 queue with arrival rate $\lambda$ and service time $S$.  Let $p$ and $\theta$ be given by (\ref{eq:p}) and (\ref{eq:theta}), respectively.  Suppose that $S$ is IFR and that $\rho=\lambda\mathbb{E}S<1$.  Let $N\sim\mbox{Geom}(p)$ and $Y=\sum_{i=1}^NX_i$, where $X,X_1,X_2,\ldots$ are i$.$i$.$d$.$ with $(1-p)\theta\mathbb{E}X\geq1-p\theta$.  Then
$$
d_{TV}(\mathcal{L}(W),\mathcal{L}(Y))\leq H_p(X)\left(\frac{(1-p)\mathbb{E}X}{p}-\frac{\rho}{1-\rho}\right)\,.
$$
\end{thm}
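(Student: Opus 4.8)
\noindent\emph{Proof proposal for Theorem \ref{thm:queue}.}
The plan is to obtain the theorem as a direct application of Theorem \ref{thm:main} to the hitting time $W$ of (\ref{eq:hit}) for the embedded chain $\{Z_n\}$ of the queue, so the real content is to check the two hypotheses of Theorem \ref{thm:main} (that $\mathbb{P}(W=0)\in(0,1)$ and that $r_W$ admits a suitable lower bound $\delta$) and then to evaluate the two expectations in its conclusion. The first hypothesis is immediate from (\ref{eq:p}): since $\lambda>0$ and $S>0$ almost surely, $p=\mathbb{P}(W=0)=\mathbb{E}e^{-\lambda S}\in(0,1)$, and this is exactly the geometric parameter used to build $Y$, as required.

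For the failure rate bound I would argue as follows. Since $S$ is IFR, Shanthikumar's Theorem 5.1 applies; in the language of Section \ref{sec:dfr} this amounts to the matrix $P^+$ associated with $\{Z_n\}$ being $\mbox{TP}_2$, so by (the proof of) his Theorem 3.1 the hitting time $W$ is DFR and satisfies $\widetilde{r}_W(j)\geq\widetilde{\delta}$ for all $j$, with $\widetilde{\delta}$ given by (\ref{eq:delta}). The key simplification is that $p_{ij}=g(j+1-i)$ with $g$ supported on the nonnegative integers, so $p_{i0}=g(1-i)=0$ for every $i\geq2$; hence the series in (\ref{eq:delta}) collapses to the single term $\widetilde{\delta}=p_{10}\lim_{j\to\infty}\mathbb{P}(Z_j=1\mid Z_j\geq1)=p\theta$, where $\theta$ is evaluated through Kyprianou's formula (\ref{eq:theta}). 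Applying Lemma \ref{lem:delta} then yields $r_W(j)\geq\widetilde{\delta}(1-\widetilde{\delta})^{-1}=p\theta(1-p\theta)^{-1}$ for all $j$, so we may take $\delta=p\theta(1-p\theta)^{-1}$ in Theorem \ref{thm:main}.

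It remains to verify the mean condition and to compute the bound. With this $\delta$ one has $\frac{p}{(1-p)\delta}=\frac{1-p\theta}{(1-p)\theta}$, so the requirement $\mathbb{E}X\geq\frac{p}{(1-p)\delta}$ of Theorem \ref{thm:main} is precisely the stated hypothesis $(1-p)\theta\mathbb{E}X\geq1-p\theta$. Theorem \ref{thm:main} then gives $d_{TV}(\mathcal{L}(W),\mathcal{L}(Y))\leq H_p(X)(\mathbb{E}Y-\mathbb{E}W)$. By Wald's identity (using independence of $N$ and the $X_i$) together with $\mathbb{E}N=(1-p)/p$ for $N\sim\mbox{Geom}(p)$ we get $\mathbb{E}Y=\frac{(1-p)\mathbb{E}X}{p}$, while $\mathbb{E}W=\rho(1-\rho)^{-1}$ is the classical mean recorded in Section \ref{sec:dfr} (equivalently $\mathbb{E}[W+1]=(1-\rho)^{-1}$). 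Substituting these two expressions into the bound gives the claimed inequality; the remark following Theorem \ref{thm:main} guarantees the right-hand side is nonnegative.

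The routine part is the algebraic translation between $\widetilde{\delta}$ and $\delta$ via Lemma \ref{lem:delta} and the rewriting of the mean condition; the substantive ingredients are external, namely Shanthikumar's DFR theorem with the explicit form (\ref{eq:delta}) of the failure-rate bound and Kyprianou's identity (\ref{eq:theta}). Accordingly the main point to get right is the bookkeeping around (\ref{eq:delta}): one must check that $p_{i0}$ vanishes for all $i\geq2$, so that $\widetilde{\delta}$ reduces to the single product $p\theta$ rather than an intractable series, and that the limiting conditioned distribution defining $\theta$ genuinely exists — which is where the stability assumption $\rho<1$ enters.
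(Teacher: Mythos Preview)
Your proposal is correct and follows essentially the same route as the paper: apply Theorem \ref{thm:main} with $\delta=p\theta(1-p\theta)^{-1}$, obtained by combining Shanthikumar's DFR result, the collapse of (\ref{eq:delta}) to $\widetilde\delta=p\theta$ via $p_{i0}=0$ for $i\geq2$, Kyprianou's formula (\ref{eq:theta}), and Lemma \ref{lem:delta}; then identify the mean condition and substitute $\mathbb{E}Y=(1-p)\mathbb{E}X/p$ and $\mathbb{E}W=\rho/(1-\rho)$. The only cosmetic difference is that the paper invokes Shanthikumar's Theorem 5.1 directly for the DFR property rather than phrasing it through the $\mbox{TP}_2$ condition, but the substance is identical.
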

The number of customers served in a busy period of this queueing system is closely related to the total progeny of a certain branching process, and so our Theorem \ref{thm:queue} may also be applied in that setting.  If we define the offspring of a customer to be the other customers who arrive while he is being served, the number of customers served during a busy period has the same distribution as the total progeny of the customer initiating the busy period.  See page 284 of \cite{a03} for further details.

To illustrate our Theorem \ref{thm:queue}, we consider the example where $S\sim\Gamma(k,\beta)$ has an Erlang distribution for some integer $k\geq1$ and some $\beta>0$.  In this case, $S$ is indeed IFR.  Since $\mathbb{E}S=k\beta^{-1}$, our condition on $\rho$ requires that $k\lambda<\beta$.  

Using (\ref{eq:p}), $p=\left(1+\lambda\beta^{-1}\right)^{-k}$.  The Erlang density has Laplace transform $\varphi(s)=\beta^k(s+\beta)^{-k}$, and since $k\lambda<\beta$ it is straightforward to check that $\xi=(\lambda k\beta^k)^{1/(k+1)}-\beta$ and that therefore, by (\ref{eq:theta}),
$$
\theta=\left(\frac{\beta+\lambda}{\beta}\right)^k\left(1-\frac{1}{A}\right)\,,\mbox{ where }A=(\beta+\lambda)\left(\frac{k^k}{\lambda\beta^k}\right)^{1/(k+1)}-k\,.
$$
Theorem \ref{thm:queue} thus requires that we choose
\begin{equation}\label{eq:xq}
\mathbb{E}X\geq\frac{\beta^k}{(A-1)((\beta+\lambda)^k-\beta^k)}\,.
\end{equation}
If we choose $X$ such that equality holds in (\ref{eq:xq}), the upper bound of Theorem \ref{thm:queue} becomes $H_p(X)U\leq U$, where
\begin{equation}\label{eq:u}
U=\frac{1}{A-1}-\frac{k\lambda}{\beta-k\lambda}\,.
\end{equation}
Some numerical illustration of this bound is given in Table \ref{tab:erlang}.
\begin{table}
\caption{Some values of the upper bound $U$ of (\ref{eq:u}) for the Erlang service time example.  Invalid parameter choices are marked with --.}
\label{tab:erlang}
\begin{center}
\begin{tabular}{ccccccc}
\hline
\multirow{2}{*}{$k$}& \multirow{2}{*}{$\lambda$} & \multicolumn{5}{c}{$\beta$}\\
\cline{3-7}
& & 0.1 & 0.5 & 1 & 1.5 & 10 \\
\hline
\multirow{5}{*}{1} & 0.001 & 0.1134 & 0.0470 & 0.0327 & 0.0265 & 0.0101 \\
& 0.005 & 0.3183 & 0.1134 & 0.0769 & 0.0617 & 0.0229 \\
& 0.01 & 0.5652 & 0.1714 & 0.1134 & 0.0901 & 0.0327 \\
& 0.05 & $>1$ & 0.5652 & 0.3183 & 0.2388 & 0.0769 \\
& 0.1 & -- & $>1$ & 0.5652 & 0.3978 & 0.1134 \\
\hline
\multirow{5}{*}{5} & 0.001 & 0.3784 & 0.1985 & 0.1588 & 0.1406 & 0.0846 \\
& 0.005 & $>1$ & 0.3783 & 0.2781 & 0.2378 & 0.1294 \\
& 0.01 & $>1$ & 0.5619 & 0.3784 & 0.3137 & 0.1588 \\
& 0.05 & -- & $>1$ & $>1$ & 0.8366 & 0.2781 \\
& 0.1 & -- & -- & $>1$ & $>1$ & 0.3784 \\
\hline
\multirow{5}{*}{10} & 0.001 & 0.5777 & 0.2827 & 0.2271 & 0.2025 & 0.1282 \\
& 0.005 & $>1$ & 0.5777 & 0.4027 & 0.3402 & 0.1874 \\
& 0.01 & -- & 0.9890 & 0.5777 & 0.4614 & 0.2272 \\
& 0.05 & -- & -- & $>1$ & $>1$ & 0.4027 \\
& 0.1 & -- & -- & -- & $>1$ & 0.5777 \\
\hline
\end{tabular}
\end{center}
\end{table}

\subsection{Time to extinction of a birth-death process}

We let  $\{Z_n:n\geq-1\}$ be the Markov chain with $Z_{-1}=1$, $p_{00}=1$, and 
\begin{displaymath}
p_{ij}=\left\{ \begin{array}{cl}
p_i & \textrm{if } j=i+1\,,\\
q_i & \textrm{if }j=i-1\,,\\
r_i & \textrm{if }j=i\,,\\
0 & \textrm{otherwise}\,,
\end{array} \right.
\end{displaymath}
for $i\geq1$.  Let $W$ be the hitting time defined by (\ref{eq:hit}): the time when this discrete birth-death process becomes extinct.

Clearly we have that $p=\mathbb{P}(W=0)=q_1$ and, from (\ref{eq:delta}),
\begin{equation}\label{eq:bddelta}
\widetilde{\delta}=q_1\lim_{j\rightarrow\infty}\mathbb{P}(Z_j=1|Z_j\geq1)\,.
\end{equation} 
To find an expression for this limit, we use the famous Karlin--McGregor \cite{km59} representation of the $n$-step transition probabilities of this chain.  Define $\pi_1=1$ and
$$
\pi_j=\frac{p_1\cdot p_2\cdots p_{j-1}}{q_2\cdot q_3\cdots q_j}\,,
$$  
for $j\geq2$.  Then Karlin and McGregor \cite{km59} show that there is a unique positive measure $\psi$, of total mass 1, supported on $[-1,1]$ such that
$$
p_{ij}(n)=\mathbb{P}(Z_n=j|Z_0=i)=\pi_j\int_{-1}^1x^nQ_i(x)Q_j(x)\,d\psi(x)\,,\qquad\mbox{for }i,j\geq1\,,
$$
where $\{Q_j:j\geq1\}$ is a sequence of polynomials (orthogonal with respect to $\psi$) satisfying the relations $Q_1(x)=1$, $p_1Q_2(x)=x-r_1$, and 
$$
xQ_j(x)=q_jQ_{j-1}(x)+r_jQ_j(x)+p_jQ_{j+1}(x)\,,
$$
for $j\geq2$.  Following the notation of van Doorn and Schrijner \cite{vds95}, $Q_{j+1}$ has $j$ distinct zeros which we denote $x_{1j}<x_{2j}<\cdots<x_{jj}$.  We write $\eta=\lim_{k\rightarrow\infty}x_{kk}$ and
$$
C_n(\psi)=\frac{\int_{-1}^0(-x)^n\,d\psi(x)}{\int_0^1x^n\,d\psi(x)}\,.
$$
In what follows we make the following assumptions:
\begin{eqnarray}
\label{eq:bd1}\sum_{k=1}^\infty(p_k\pi_k)^{-1}&=&\infty\,,\\
\label{eq:bd2}\eta&<&1\,,\\
\label{eq:bd3}\lim_{n\rightarrow\infty}C_n(\psi)&=&0\,,\\
\label{eq:bd4}r_j&\geq&\frac12\mbox{ for all }j\geq1\,.
\end{eqnarray}
Assumption (\ref{eq:bd1}) guarantees that the birth-death process does eventually reach extinction: see Section 4 of \cite{vds95}.  Assumptions (\ref{eq:bd2}) and (\ref{eq:bd3}) are used to ensure that the limit (\ref{eq:bddelta}) exists, and are taken from Lemma 4.1 of \cite{vds95}.  Finally, assumption (\ref{eq:bd4}) is sufficient to guarantee that the transition matrix of our birth-death chain is $\mbox{TP}_2$, and hence that the extinction time $W$ is DFR. See page 6 of \cite{fj11} and Remark 3.2 of \cite{s88}.

We note that Section 3 of \cite{vds95} gives several conditions under which the assumption (\ref{eq:bd3}) holds and which may be used to check its validity in practice.

Under the assumptions (\ref{eq:bd1})--(\ref{eq:bd4}), Lemma 4.1 of \cite{vds95} gives us that $\widetilde{\delta}=1-\eta$, and so (by Lemma \ref{lem:delta}) we may take $\delta=\eta^{-1}(1-\eta)$ in Theorem \ref{thm:main}.  Applying that result, we then obtain the following.
\begin{thm}
Let $W$ be the time to extinction of the discrete birth-death process defined above.  Assume (\ref{eq:bd1})--(\ref{eq:bd4}) hold.  Let $N\sim\mbox{Geom}(q_1)$ and $Y=\sum_{i=1}^NX_i$, where $X,X_1,X_2,\ldots$ are i$.$i$.$d$.$ with $(1-q_1)(1-\eta)\mathbb{E}X\geq q_1\eta$.  Then
$$
d_{TV}(\mathcal{L}(W),\mathcal{L}(Y))\leq H_{q_1}(X)\left(\frac{(1-q_1)\mathbb{E}X}{q_1}-\mathbb{E}W\right)\,.
$$
\end{thm}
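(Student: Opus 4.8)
The plan is to obtain this theorem as a direct instance of Theorem \ref{thm:main} with $p=q_1$ and $\delta=\eta^{-1}(1-\eta)$, so the work is entirely in checking that the three hypotheses of that theorem hold. First I would identify $\mathbb{P}(W=0)$: since the chain is started at $Z_{-1}=1$, the event $\{W=0\}$ equals $\{Z_0=0\}$, which in a birth--death chain can only happen via the down-step from state $1$; hence $\mathbb{P}(W=0)=p_{10}=q_1=:p$. This $p$ lies in $(0,1)$: $q_1>0$ since assumption (\ref{eq:bd1}) forces state $0$ to be reachable (impossible if $q_1=0$), while $q_1=1$ would make both $W$ and $Y$ degenerate at $0$ with the bound trivially valid, so there is no loss in assuming $q_1<1$.

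Second, I would establish the failure-rate lower bound. Assumption (\ref{eq:bd4}) guarantees the transition matrix is $\mbox{TP}_2$ (page 6 of \cite{fj11}, Remark 3.2 of \cite{s88}), so Theorem 3.1 of \cite{s88} gives that $W$ is DFR and, from the proof of that theorem, $\widetilde{r}_W(j)\geq\widetilde{\delta}$ for all $j$, with $\widetilde{\delta}$ as in (\ref{eq:delta}). Because $p_{i0}=0$ for $i\geq2$, (\ref{eq:delta}) reduces to (\ref{eq:bddelta}), i.e.\ $\widetilde{\delta}=q_1\lim_{j\to\infty}\mathbb{P}(Z_j=1\mid Z_j\geq1)$; assumptions (\ref{eq:bd1})--(\ref{eq:bd3}) are exactly the hypotheses of Lemma 4.1 of \cite{vds95}, which both ensures this quasi-stationary conditional limit exists and evaluates $\widetilde{\delta}=1-\eta$. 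Lemma \ref{lem:delta} then yields $r_W(j)\geq\widetilde{\delta}(1-\widetilde{\delta})^{-1}=(1-\eta)/\eta=:\delta$, which is strictly positive by (\ref{eq:bd2}).

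Third, I would verify the moment condition. With the above choices,
$$
\frac{p}{(1-p)\delta}=\frac{q_1}{1-q_1}\cdot\frac{\eta}{1-\eta}=\frac{q_1\eta}{(1-q_1)(1-\eta)}\,,
$$
so, since $(1-q_1)(1-\eta)>0$, the requirement $\mathbb{E}X\geq p/((1-p)\delta)$ of Theorem \ref{thm:main} is precisely the assumed inequality $(1-q_1)(1-\eta)\mathbb{E}X\geq q_1\eta$. Theorem \ref{thm:main} therefore applies and gives $d_{TV}(\mathcal{L}(W),\mathcal{L}(Y))\leq H_{q_1}(X)(\mathbb{E}Y-\mathbb{E}W)$. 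Finally, by Wald's identity $\mathbb{E}Y=\mathbb{E}N\,\mathbb{E}X$ with $\mathbb{E}N=(1-q_1)/q_1$ for $N\sim\mbox{Geom}(q_1)$, so $\mathbb{E}Y=(1-q_1)\mathbb{E}X/q_1$, and substituting produces the stated bound.

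Since every analytic ingredient is already in hand --- the $\mbox{TP}_2\Rightarrow$ DFR implication, the Karlin--McGregor-based identification of $\widetilde{\delta}$ in \cite{vds95}, and Theorem \ref{thm:main} itself --- there is no genuinely difficult step. The only point needing care is confirming that assumptions (\ref{eq:bd1})--(\ref{eq:bd4}) furnish exactly what each quoted result requires (reachability of $0$, existence of the conditional limit, and the $\mbox{TP}_2$ property), and that the passage from $\widetilde{\delta}$ to $\delta$ via Lemma \ref{lem:delta} matches the hypothesis imposed on $\mathbb{E}X$; I expect the final write-up to be little more than the paragraph of discussion immediately preceding the theorem.
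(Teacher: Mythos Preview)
Your proposal is correct and follows essentially the same route as the paper: the theorem is obtained directly from Theorem~\ref{thm:main} with $p=q_1$ and $\delta=\eta^{-1}(1-\eta)$, using assumption~(\ref{eq:bd4}) for the $\mbox{TP}_2$/DFR property, assumptions~(\ref{eq:bd1})--(\ref{eq:bd3}) together with Lemma~4.1 of \cite{vds95} to get $\widetilde{\delta}=1-\eta$, and Lemma~\ref{lem:delta} to pass to $\delta$. Your write-up is in fact slightly more explicit than the paper's (which leaves the argument to the discussion preceding the statement), but there is no substantive difference.
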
  

Finally, note that Brown \cite{b15} considers exponential approximation for hitting times of birth-death processes in continuous time, taking advantage of monotonicity of the failure rate in his work.  See also the references within Brown's work.

\section{Proof of Theorem \ref{thm:main}}\label{sec:proof}

We use this section to give the proof of our main result, Theorem \ref{thm:main}.  The proof is based on Stein's method for compound geometric approximation.  Stein's method was first applied to the problem of approximation by a geometric distribution by Barbour and Gr\"ubel \cite{bg95} and Pek\"oz \cite{p96}.  More recent developments in Stein's method for geometric approximation are given in \cite{pw00} and \cite{prr13}.  Stein's method has previously been used in the compound geometric case in \cite{d10}, and compound geometric distributions have appeared in conjunction with Stein's method in papers by Bon \cite{b06}, Pek\"oz and R\"ollin \cite{pr11}, and Pek\"oz, R\"ollin and Ross \cite{prr13}.  The interested reader is also referred to \cite{bc05} and references therein for an introduction to Stein's method more generally.

Throughout this section we will let $W$ and $Y$ be as defined in Theorem \ref{thm:main}, and $H_p(X)$ be given by (\ref{eq:steinfactor}).  We define the random variable $V$ to be such that $V+X=_{st}(W|W>0)$, where $=_{st}$ denotes equality in distribution.

We will employ the usual stochastic ordering in what follows.  Recall that for any two random variables $T$ and $U$, $T$ is said to be stochastically smaller than $U$ (written $T\leq_{st}U$) if $\mathbb{P}(T>j)\leq\mathbb{P}(U>j)$ for all $j$.
\begin{lem}\label{lem:ord1}
Let $W$ be a nonnegative, integer-valued random variable with $\mathbb{P}(W=0)=p$ and $r_W(j)\geq\delta>0$ for all $j\in\mathbb{Z}^+$.  Let $V$ be as above and suppose that 
$\mathbb{E}X\geq\frac{p}{(1-p)\delta}$.  Then $V+X\leq_{st}W+X$.
\end{lem}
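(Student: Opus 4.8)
The plan is to verify the defining tail inequality by hand. Write $\bar F(j)=\mathbb{P}(W>j)$, with the convention $\bar F(i)=1$ for $i<0$. Since $V+X=_{st}(W\mid W>0)$ we have $\mathbb{P}(V+X>j)=\bar F(j)/(1-p)$ for $j\ge 0$, whereas $\mathbb{P}(W+X>j)=\mathbb{E}[\bar F(j-X)]$ by independence of $W$ and $X$; hence $V+X\le_{st}W+X$ is equivalent to
\[
\bar F(j)\;\le\;(1-p)\,\mathbb{E}\!\left[\bar F(j-X)\right]\qquad\text{for all }j\ge 1,
\]
the case $j=0$ being an identity ($\bar F(0)=1-p$ and $\bar F(-k)=1$ for $k\ge 1$). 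I would prove this using two facts: (a) the hypothesis $r_W(j)\ge\delta$, rewritten as $\bar F(i-1)\ge(1+\delta)\bar F(i)$ for every $i\ge 0$, which iterates to $\bar F(j-k)\ge(1+\delta)^{\min\{k,\,j+1\}}\bar F(j)$ and to $\bar F(j)\le(1-p)(1+\delta)^{-j}$; and (b) the moment bound $\mathbb{E}X\ge p/((1-p)\delta)$, fed in through Bernoulli's inequality $(1+\delta)^{m}\ge 1+\delta m$.

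For $j$ not too small this is immediate. Dividing the target by $\bar F(j)$ (the inequality is trivial when $\bar F(j)=0$), I need $\mathbb{E}[\bar F(j-X)/\bar F(j)]\ge(1-p)^{-1}$; by (a) and (b), $\bar F(j-X)/\bar F(j)\ge 1+\delta\min\{X,j+1\}$, whose expectation is at least $1+\delta\,\mathbb{E}[\min\{X,j+1\}]$, and this reaches $1+p/(1-p)=(1-p)^{-1}$ once $\mathbb{E}[\min\{X,j+1\}]\ge p/((1-p)\delta)$ --- true for all large $j$, and for every $j$ when $X\le j+1$ almost surely. The remaining small-$j$ case is where this estimate is too wasteful: on the event $\{X\ge j+2\}$ it discards $\bar F(j-X)=1$ in favour of the far smaller $(1+\delta)^{j+1}\bar F(j)$. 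There I would keep that event separate, writing $\mathbb{E}[\bar F(j-X)]=\sum_{k=1}^{j}\mathbb{P}(X=k)\bar F(j-k)+\mathbb{P}(X\ge j+1)$, bounding the finite sum below by (a), and playing the term $\mathbb{P}(X\ge j+1)$ against $\bar F(j)\le(1-p)(1+\delta)^{-j}$.

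The main obstacle is exactly this small-$j$ regime, and it is here that the assumption that $V$ is a bona fide $\mathbb{Z}^+$-valued random variable is indispensable: the displayed inequality is actually false for some $W$ and $X$ meeting only (a) and (b), so one must use the decomposition $\mathcal{L}(W)=p\,\delta_0+(1-p)\,\mathcal{L}(V)\ast\mathcal{L}(X)$ to recover the low-order behaviour of $W$. For instance, this decomposition forces $\mathbb{P}(W=1)=(1-p)\,\mathbb{P}(V=0)\,\mathbb{P}(X=1)$, which combined with $r_W(1)\ge\delta$ (hence $\mathbb{P}(W=1)\ge\delta(1-p)/(1+\delta)$) pins down how small $\mathbb{P}(X=1)$, and with it the split above, can be. I would expect this bookkeeping --- not either of the two headline estimates --- to be where the real work lies.
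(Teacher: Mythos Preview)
The paper's proof is much shorter and does not split into cases. After reducing to $(1-p)\,\mathbb{P}(W+X>j)\ge\mathbb{P}(W>j)$, it conditions on $X$ to write
\[
\mathbb{P}(W+X>j)=\mathbb{P}(W>j)+\mathbb{E}\Bigl[\sum_{k=j+1-X}^{j}\mathbb{P}(W=k)\Bigr],
\]
divides by $\mathbb{P}(W>j)$, bounds $\mathbb{P}(W=k)/\mathbb{P}(W>j)\ge r_W(k)\ge\delta$ for each $k$ in the sum, and records in a single line
\[
\frac{1}{\mathbb{P}(W>j)}\,\mathbb{E}\Bigl[\sum_{k=j+1-X}^{j}\mathbb{P}(W=k)\Bigr]\ \ge\ \mathbb{E}\Bigl[\sum_{k=j+1-X}^{j}r_W(k)\Bigr]\ \ge\ \delta\,\mathbb{E}X\ \ge\ \frac{p}{1-p}.
\]
That is the entire argument: no separate small-$j$ regime, no appeal to the decomposition through $V$.

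Your large-$j$ estimate is essentially this same calculation, except that you track the truncation honestly and write $\delta\,\mathbb{E}[\min\{X,j+1\}]$ where the paper writes $\delta\,\mathbb{E}X$. The paper's last inequality implicitly treats the sum $\sum_{k=j+1-X}^{j}r_W(k)$ as having $X$ terms each at least $\delta$, which is only literally true when $j+1-X\ge 0$; for negative indices $r_W$ is zero. So the subtlety you flag is genuine, and your counterexample instinct is correct: with $p=\tfrac12$, small $\delta$, $\bar F(1)=(1-p)/(1+\delta)$, and $X\in\{1,M\}$ each with probability $\tfrac12$ for large $M$, the tail inequality really does fail at $j=1$ even though $r_W\ge\delta$ and $\mathbb{E}X\ge p/((1-p)\delta)$.

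Where your own plan runs into trouble is the proposed repair. In the paper, $V$ is introduced only via the identity $V+X=_{st}(W\mid W>0)$, and everything downstream uses $V+X$ as a single object (just its tail function), never the marginal law of $V$; nowhere is it assumed or shown that $V$ exists as a bona fide $\mathbb{Z}^+$-valued random variable. The structural constraint you want to exploit, $\mathbb{P}(W=1)=(1-p)\,\mathbb{P}(V=0)\,\mathbb{P}(X=1)$, is therefore not available from the stated hypotheses of the lemma. In short: you have correctly located a point the paper's one-line display passes over, but the remedy you sketch leans on an assumption the lemma does not make.
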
  
\begin{proof}
From the definition of $V$, the required stochastic ordering will follow if 
\begin{equation}\label{eq:ord1}
(1-p)\mathbb{P}(W+X>j)\geq\mathbb{P}(W>j)\,,
\end{equation}
for all $j\in\mathbb{Z}^+$.

Conditioning on $X$ (which is independent of $W$), we write
$$
\mathbb{P}(W+X>j)=\mathbb{P}(W>j)+\mathbb{E}\left[\sum_{k=j+1-X}^j\mathbb{P}(W=k)\right]\,.
$$
Using this, we rearrange (\ref{eq:ord1}) to obtain that the required stochastic ordering holds if
\begin{equation}\label{eq:ord2}
\frac{1}{\mathbb{P}(W>j)}\mathbb{E}\left[\sum_{k=j+1-X}^j\mathbb{P}(W=k)\right]\geq\frac{p}{1-p}\,.
\end{equation}
Now, if $r_W(k)\geq\delta$ for all $k$ then
$$
\frac{1}{\mathbb{P}(W>j)}\mathbb{E}\left[\sum_{k=j+1-X}^j\mathbb{P}(W=k)\right]\geq\mathbb{E}\left[\sum_{k=j+1-X}^jr_W(k)\right]\geq\delta\mathbb{E}X\,.
$$
Hence, if $\mathbb{E}X\geq\frac{p}{(1-p)\delta}$ then (\ref{eq:ord2}) holds and our lemma follows.
\end{proof}

The proof of Theorem \ref{thm:main} then goes along similar lines to that of Proposition 3.1 in \cite{d10}.  For $A\subseteq\mathbb{Z}^+$ we let $f_A:\mathbb{Z}^+\mapsto\mathbb{R}$ be such that $f_A(0)=0$ and
\begin{equation}\label{eq:stein}
I(j\in A)-\mathbb{P}(Y\in A)=(1-p)\mathbb{E}f_A(j+X)-f_A(j)\,,
\end{equation}  
$I(\cdot)$ denoting an indicator function.  We then note the following property of $f_A$.
\begin{lem}\label{lem:stein}
Let $f_A$ be as above.  Then $\sup_{j\in\mathbb{Z}^+}\left|f_A(j+1)-f_A(j)\right|\leq\frac{1}{p}H_p(X)$. 
\end{lem}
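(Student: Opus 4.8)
The plan is to solve the Stein equation (\ref{eq:stein}) in closed form. I claim that its natural (bounded) solution with $f_A(0)=0$ is
$$
f_A(j)=\frac{1}{p}\left(\mathbb{P}(Y\in A)-\mathbb{P}(j+Y\in A)\right),\qquad j\in\mathbb{Z}^+.
$$
Here $f_A(0)=0$ is immediate, and to verify (\ref{eq:stein}) one substitutes this formula and uses the defining identity of the compound geometric law, $\mathbb{P}(j+Y\in A)=p\,I(j\in A)+(1-p)\mathbb{P}(j+X'+Y'\in A)$, where $X'$ and $Y'$ are independent copies of $X$ and $Y$ (independent of one another); this follows from $N\sim\mbox{Geom}(p)$ and the memoryless property. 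Uniqueness of the bounded solution is standard (a difference $h$ of two solutions satisfies $h=(1-p)\mathbb{E}h(\cdot+X)$, forcing $h\equiv0$). Consequently
$$
f_A(j+1)-f_A(j)=\frac{1}{p}\left(\mathbb{P}(j+Y\in A)-\mathbb{P}(j+1+Y\in A)\right),
$$
and hence $\sup_{j\in\mathbb{Z}^+}|f_A(j+1)-f_A(j)|\leq\frac{1}{p}\,d_{TV}(\mathcal{L}(Y),\mathcal{L}(Y+1))$. It therefore suffices to show $d_{TV}(\mathcal{L}(Y),\mathcal{L}(Y+1))\leq H_p(X)$, and I will bound the left-hand side separately by each of the two quantities in the minimum in (\ref{eq:steinfactor}).

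For the first, set $d_k=\mathbb{P}(Y=k)-\mathbb{P}(Y=k-1)$ for $k\in\mathbb{Z}^+$ (and $d_k=0$ for $k<0$), so that $d_{TV}(\mathcal{L}(Y),\mathcal{L}(Y+1))=\sum_{k\geq0}d_k^+$. The compound geometric recursion $\mathbb{P}(Y=k)=p\,I(k=0)+(1-p)\sum_{i\geq1}\mathbb{P}(X=i)\mathbb{P}(Y=k-i)$ gives $d_0=p>0$, $d_1=p((1-p)\mathbb{P}(X=1)-1)<0$, and $d_k=(1-p)\sum_{i\geq1}\mathbb{P}(X=i)d_{k-i}$ for $k\geq2$. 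Write $Q=\sum_{k\geq1}d_k^+$; this is finite since $\sum_k|d_k|\leq2$, and $Q=\sum_{k\geq2}d_k^+$ because $d_1<0$. Taking positive parts in the recursion, summing over $k\geq2$, and interchanging the two nonnegative sums (the boundary term $d_0=p$ entering the contributions with $i\geq2$) gives $Q\leq(1-p)(Q+p\,\mathbb{P}(X>1))$, whence $Q\leq(1-p)\mathbb{P}(X>1)$ and
$$
d_{TV}(\mathcal{L}(Y),\mathcal{L}(Y+1))=p+Q\leq p+(1-p)\mathbb{P}(X>1).
$$

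For the second, write $S_0=0$ and $S_n=X_1+\cdots+X_n$, and observe that $\mathcal{L}(Y)$ and $\mathcal{L}(Y+1)$ are mixtures of $\{\mathcal{L}(S_n)\}$ and $\{\mathcal{L}(S_n+1)\}$ respectively, with the same mixing weights $p(1-p)^n$. By convexity of total variation distance, and with the $n=0$ term being $d_{TV}(\delta_0,\delta_1)=1$,
$$
d_{TV}(\mathcal{L}(Y),\mathcal{L}(Y+1))\leq\sum_{n\geq0}p(1-p)^n\,d_{TV}(\mathcal{L}(S_n),\mathcal{L}(S_n+1))=p\left(1+\sum_{n\geq1}(1-p)^n\,d_{TV}(\mathcal{L}(S_n),\mathcal{L}(S_n+1))\right).
$$
For $n\geq1$ a sharp concentration inequality for sums of independent integer-valued random variables (cf. \cite{prr13}) gives $d_{TV}(\mathcal{L}(S_n),\mathcal{L}(S_n+1))\leq\sqrt{2/(\pi n u)}$, where $u=1-d_{TV}(\mathcal{L}(X),\mathcal{L}(X+1))$. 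Combining this with the elementary estimate $\sum_{n\geq1}x^n n^{-1/2}\leq\int_0^\infty x^t t^{-1/2}\,dt=\sqrt{\pi/(-\log x)}$ (valid for $x\in(0,1)$ since $t\mapsto x^t t^{-1/2}$ decreases on $(0,\infty)$) yields
$$
d_{TV}(\mathcal{L}(Y),\mathcal{L}(Y+1))\leq p\left(1+\sqrt{\frac{2}{\pi u}}\cdot\sqrt{\frac{\pi}{-\log(1-p)}}\right)=p\left(1+\sqrt{\frac{-2}{u\log(1-p)}}\right).
$$
Combining the two displays gives $d_{TV}(\mathcal{L}(Y),\mathcal{L}(Y+1))\leq H_p(X)$, and hence the lemma.

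The step I expect to be most delicate is the second bound: it relies on having the concentration inequality $d_{TV}(\mathcal{L}(S_n),\mathcal{L}(S_n+1))\leq\sqrt{2/(\pi n u)}$ available in exactly this sharp form, so that the resulting factor is precisely $\sqrt{-2/(u\log(1-p))}$, together with the elementary but slightly fiddly evaluation of $\sum_{n\geq1}(1-p)^n n^{-1/2}$. A more minor point is the index bookkeeping in the interchange of sums for the first bound, where one must correctly track the contributions of the boundary term $d_0=p$ and of $d_1$ (which, being negative, drops out of $Q$).
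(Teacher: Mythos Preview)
Your proof is correct and follows essentially the same route as the paper's: both obtain $|f_A(j+1)-f_A(j)|\leq p^{-1}d_{TV}(\mathcal{L}(Y),\mathcal{L}(Y+1))$ from the explicit solution of the Stein equation, and then bound $d_{TV}(\mathcal{L}(Y),\mathcal{L}(Y+1))$ by each of the two terms in $H_p(X)$. The only difference is that where the paper cites Theorem~3.1 of \cite{vc96} for the bound $p+(1-p)\mathbb{P}(X>1)$ and defers to the analysis of Theorem~3.1 of \cite{prr13} for the bound $p(1+\sqrt{-2/(u\log(1-p))})$, you supply self-contained arguments for both (the recursion on $d_k^+$ and the mixture-convexity plus concentration-plus-integral estimate, respectively); your closed form $f_A(j)=p^{-1}(\mathbb{P}(Y\in A)-\mathbb{P}(j+Y\in A))$ is also a slightly cleaner repackaging of the paper's series expression.
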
  
\begin{proof}
From the definition (\ref{eq:stein}), it is easy to check that 
\begin{equation}\label{eq:proof0}
f_A(j)=-\sum_{i=0}^\infty(1-p)^i\left[\mathbb{P}(j+X_1+\cdots+X_i\in A)-\mathbb{P}(Y+X_1+\cdots+X_i\in A)\right]\,,
\end{equation}
from which it follows that $|f_A(j+1)-f_A(j)|$ may be bounded by
\begin{equation}\label{eq:proof1}
\sum_{i=0}^\infty(1-p)^i\left|\mathbb{P}(j+1+X_1+\cdots+X_i\in A)-\mathbb{P}(j+X_1+\cdots+X_i\in A)\right|\,.
\end{equation}
To complete the proof, we bound (\ref{eq:proof1}) in two different ways.  Firstly, letting $N\sim\mbox{Geom}(p)$, we write this as 
\begin{multline}\label{eq:proof2}
\frac{1}{p}|\mathbb{P}(j+1+X_1+\cdots+X_N\in A)-\mathbb{P}(j+X_1+\cdots+X_N\in A)|\\
\leq\frac{1}{p}d_{TV}(\mathcal{L}(Y),\mathcal{L}(Y+1))\leq1+\frac{(1-p)}{p}\mathbb{P}(X>1)\,,
\end{multline}
where the final inequality uses Theorem 3.1 of \cite{vc96}.  Alternatively, we have that
\begin{multline*}
\left|\mathbb{P}(j+1+X_1+\cdots+X_i\in A)-\mathbb{P}(j+X_1+\cdots+X_i\in A)\right|\\
\leq d_{TV}(\mathcal{L}(X_1+\cdots+X_i),\mathcal{L}(X_1+\cdots+X_i+1))\,.
\end{multline*}
We may then follow the analysis of Theorem 3.1 of \cite{prr13} to obtain
$$
|f_A(j+1)-f_A(j)|\leq1+\sqrt{\frac{-2}{u\log(1-p)}}\,,
$$
where $u=1-d_{TV}(\mathcal{L}(X),\mathcal{L}(X+1))$.  This completes the proof.
\end{proof}
\begin{remark}
\emph{Lemma \ref{lem:stein} improves upon part of Theorem 2.1 of \cite{d10}, by presenting a sharper bound and removing a restriction on the support of $X$.  This may, in turn, be used to improve on Proposition 3.1 of \cite{d10}.  For a general $X$, Lemma \ref{lem:stein} gives the bound $|f_A(j+1)-f_A(j)|\leq p^{-1}$ (which is the same bound given in \cite{d10}), but also shows that a better bound is possible when $\mathbb{P}(X=1)$ is large (informally, when $Y$ is close to a geometric distribution) or when $X$ is smooth (in the sense that the total variation distance between $X$ and $X+1$ is small).  Note that the bound $|f_A(j+1)-f_A(j)|\leq p^{-1}$ is the best possible without imposing restrictions on $X$.  To see this, suppose that $X=2$ a$.$s$.$ and let $A=2\mathbb{Z}^+$.  In this case, (\ref{eq:proof0}) gives $f_A(j+1)-f_A(j)=(-1)^jp^{-1}$.  The inequalities (\ref{eq:proof2}) are also sharp, in that if $Y\sim\mbox{Geom}(p)$ then $d_{TV}(\mathcal{L}(Y),\mathcal{L}(Y+1))=p$.}
\end{remark}
Using the definitions of $f_A$ and $V$, we may write 
\begin{multline}\label{eq:ord3}
\mathbb{P}(W\in A)-\mathbb{P}(Y\in A)=(1-p)\mathbb{E}\left[f_A(W+X)-f_A(V+X)\right]\\
=(1-p)\sum_{j=0}^\infty\left[f_A(j+1)-f_A(j)\right]\left[\mathbb{P}(W+X>j)-\mathbb{P}(V+X>j)\right]\,.
\end{multline}
Now, under the conditions of Theorem \ref{thm:main}, Lemma \ref{lem:ord1} gives us that $V+X\leq_{st}W+X$.  Hence, bounding (\ref{eq:ord3}) using Lemma \ref{lem:stein} gives
$$
|\mathbb{P}(W\in A)-\mathbb{P}(Y\in A)|\leq\frac{(1-p)}{p}H_p(X)\mathbb{E}\left[W-V\right]=H_p(X)\mathbb{E}\left[Y-W\right]\,,
$$
where the final equality follows from the definition of $V$.  We have thus established Theorem \ref{thm:main}.
\begin{remark}
\emph{The techniques of this section may also be used to bound the Wasserstein distance $d_W(\mathcal{L}(W),\mathcal{L}(Y))=\sup_h\left|\mathbb{E}h(W)-\mathbb{E}h(Y)\right|$, where the supremum is taken over all 1-Lipschitz functions $h$.  Under the conditions of Theorem \ref{thm:main}, we follow the above methods to obtain the bound $d_W(\mathcal{L}(W),\mathcal{L}(Y))\leq\mathbb{E}Y-\mathbb{E}W$.  This bound is sharp.  Suppose, for example, that $W$ is IFR and $Y\sim\mbox{Geom}(p)$.  Then $W\leq_{st}Y$ (\cite[Theorem 1.B.1]{ss07}) and so $d_W(\mathcal{L}(W),\mathcal{L}(Y))=\mathbb{E}Y-\mathbb{E}W$.  See Theorem 1.A.11 of \cite{ss07}.}
\end{remark}

\section{Some further results}\label{sec:hr}

In this section we note two results, closely related to Theorem \ref{thm:main}, which may prove useful in applications.  Potential applications are indicated for each via short examples.

\subsection{Approximation for translated distributions}

Let $W$ be as in Theorem \ref{thm:main}, and let $W_m=_{st}(W-m|W\geq m)$ for some $m\in\mathbb{Z}^+$.  In many cases it is more natural to seek a compound geometric approximation for $W_m$ (for some $m\geq1$) than for $W$.  This may be achieved in a straightforward way using Theorem \ref{thm:main}.  We note that the failure rate of $W_m$ may be bounded from below by
\begin{equation}\label{eq:deltam}
\delta_m=\min_{j\geq m}r_W(j)\geq\delta\,,
\end{equation}
and that if $W$ has monotone failure rate, $W_m$ inherits this property.  Letting
\begin{equation}\label{eq:pm}
p_m=\mathbb{P}(W_m=0)=\frac{\mathbb{P}(W=m)}{\mathbb{P}(W\geq m)}\,,
\end{equation}
we may apply Theorem \ref{thm:main} to $W_m$ to obtain the following corollary.
\begin{cor}\label{cor:shift}
Let $W$ be a nonnegative, integer-valued random variable, $m\in\mathbb{Z}^+$, and $W_m=_{st}(W-m|W\geq m)$.  Let $\delta_m$ and $p_m$ be given by (\ref{eq:deltam}) and (\ref{eq:pm}), respectively.  Let $Y=\sum_{i=1}^NX_i$, where $N\sim\mbox{Geom}(p_m)$ and $X,X_1,X_2,\ldots$ are i$.$i$.$d$.$ with
$$
\mathbb{E}X\geq\frac{p_m}{(1-p_m)\delta_m}\,.
$$
Then
$$
d_{TV}(\mathcal{L}(W_m),\mathcal{L}(Y))\leq H_{p_m}(X)\left(\mathbb{E}Y-\mathbb{E}W_m\right)\,.
$$
\end{cor}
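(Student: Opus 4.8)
The plan is to deduce Corollary \ref{cor:shift} by a direct application of Theorem \ref{thm:main} to the random variable $W_m$ in place of $W$, with $p_m$ and $\delta_m$ playing the roles of $p$ and $\delta$. First I would record that, since $W$ takes values in $\mathbb{Z}^+$ and $W_m=_{st}(W-m\mid W\geq m)$, the variable $W_m$ is again nonnegative and integer-valued, with $\mathbb{P}(W_m=0)=p_m$; assuming (as is implicit, $W$ being as in Theorem \ref{thm:main}) that $\mathbb{P}(W=m)>0$ and $\mathbb{P}(W>m)>0$, one has $p_m\in(0,1)$, so the first hypothesis of the theorem is met.

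The key step is to identify the failure rate of $W_m$. For every $j\in\mathbb{Z}^+$,
$$
\mathbb{P}(W_m=j)=\frac{\mathbb{P}(W=m+j)}{\mathbb{P}(W\geq m)}\,,\qquad
\mathbb{P}(W_m>j)=\frac{\mathbb{P}(W>m+j)}{\mathbb{P}(W\geq m)}\,,
$$
so the common normalising factor cancels and
$$
r_{W_m}(j)=\frac{\mathbb{P}(W_m=j)}{\mathbb{P}(W_m>j)}=\frac{\mathbb{P}(W=m+j)}{\mathbb{P}(W>m+j)}=r_W(m+j)\,.
$$
Hence $r_{W_m}(j)\geq\min_{k\geq m}r_W(k)=\delta_m$ for all $j\in\mathbb{Z}^+$, and $\delta_m\geq\delta>0$, which verifies the failure-rate hypothesis of Theorem \ref{thm:main} for $W_m$. (The same identity shows at once that monotonicity of $r_W$ is inherited by $r_{W_m}$, as claimed in the text preceding the corollary, though this is not needed for the bound.)

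Finally, the moment condition $\mathbb{E}X\geq p_m/((1-p_m)\delta_m)$ assumed in the corollary is precisely the hypothesis of Theorem \ref{thm:main} written for $W_m$, so that theorem applies and gives $d_{TV}(\mathcal{L}(W_m),\mathcal{L}(Y))\leq H_{p_m}(X)(\mathbb{E}Y-\mathbb{E}W_m)$ with $Y=\sum_{i=1}^NX_i$, $N\sim\mbox{Geom}(p_m)$, as required. There is no real obstacle here: the only point demanding care is the failure-rate identity $r_{W_m}(j)=r_W(m+j)$ above, together with checking the mild nondegeneracy of $W$ at level $m$ needed to ensure $p_m\in(0,1)$ and $\delta_m>0$.
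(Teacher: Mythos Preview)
Your proposal is correct and follows exactly the paper's approach: the corollary is obtained by applying Theorem~\ref{thm:main} directly to $W_m$, using $\mathbb{P}(W_m=0)=p_m$ and the identity $r_{W_m}(j)=r_W(m+j)\geq\delta_m$. The paper states this as an immediate consequence without writing out the failure-rate computation, so your argument is in fact slightly more explicit than the original.
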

To illustrate one situation in which such a result would be useful, consider the Markov chain $\{Z_n:n\geq-1\}$ with state space $\{0,1,2\}$, $Z_{-1}=2$, and transition matrix
\[ \left( \begin{array}{ccc}
1 & 0 & 0 \\
\alpha_1 & \beta_1 & \epsilon_1 \\
\alpha_2 & \beta_2 & \epsilon_2 \end{array} \right)\,,\]
where, for $i\in\{1,2\}$, we have that $\alpha_i,\beta_i,\epsilon_i\in(0,1)$ with $\alpha_i+\beta_i+\epsilon_i=1$ and where we consider $\epsilon_i$ to be small.

Letting $W$ be the hitting time $W=\min\{n\geq0:Z_n=0\}$, the most natural geometric-type approximation in this setting is to approximate $W_1$ by a geometric distribution with parameter close to $\alpha_1$. We will show that this is easily achieved using Corollary \ref{cor:shift}.  Elementary calculations show that, since $\mathbb{P}(W=0)=\alpha_2$,
$$
p_1=\frac{\alpha_1\beta_2+\alpha_2\epsilon_2}{1-\alpha_2}\,,\;\;\mathbb{E}W_1=\frac{\mathbb{E}W}{1-\alpha_2}-1\,,\;\mbox{and}\;\;\mathbb{E}W=\frac{(1-\beta_1)\epsilon_2+\beta_2(1+\epsilon_1)}{(1-\beta_1)(1-\epsilon_2)-\beta_2\epsilon_1}\,.
$$

For simplicity in what follows, we will assume that $\alpha_1\geq\alpha_2$ and that $\beta_1(\beta_2+\epsilon_2)\geq\beta_2(\beta_1+\epsilon_1)$.  These conditions are sufficient to guarantee that $W$ is IFR (using Theorem 4.1 of \cite{rsz05}).  In this case we may take $X=1$ a$.$s$.$ in Corollary \ref{cor:shift}, as we did in the IFR examples we have previously considered.

Corollary \ref{cor:shift} then gives us the bound
$d_{TV}(\mathcal{L}(W_1),\mbox{Geom}(p_1))\leq A+B+C$, where
$$
A=\frac{\alpha_1\beta_2+\alpha_2\epsilon_2}{\alpha_2(1-\alpha_2)+(\alpha_1-\alpha_2)(1-\alpha_2-\epsilon_2)}\,,\;\;
B=\frac{(\alpha_1\beta_2+\alpha_2\epsilon_2)(1+\epsilon_1-\epsilon_2)}{(1-\alpha_2)(\alpha_1-\epsilon_1(\alpha_1+\alpha_2))}\,,
$$ 
and
$$
C=\frac{\epsilon_2(\alpha_1-\alpha_2)(\alpha_1\beta_2+\alpha_2\epsilon_2)}{(1-\alpha_2)^2(\alpha_2\epsilon_1+\alpha_1(1-\epsilon_2))}\,.
$$
We conclude this illustration by noting that if either $\epsilon_1=\epsilon_2=0$ or $\alpha_1=\alpha_2$ then our upper bound is zero, as expected.

\subsection{Hazard rate ordering}

In this section we will need the hazard rate ordering.  For two nonnegative random variables $T$ and $U$, $T$ is said to be smaller than $U$ in the hazard rate order (denoted $T\leq_{hr}U$) if $r_T(j)\geq r_U(j)$ for all $j$.  See, for example, Section 1.B of \cite{ss07}.

In proving Theorem \ref{thm:main}, Lemma \ref{lem:ord1} gave conditions under which $V+X\leq_{st}W+X$, which then allowed us to deduce a compound geometric approximation bound.  In the case of geometric approximation, we use the hazard rate order to express conditions under which this stochastic ordering holds.

If $X=1$ a$.$s$.$ (so we are in the geometric approximation case, and $H_p(X)=p$), then (\ref{eq:ord2}) tells us that $V+1\leq_{st}W+1$ if 
$$
r_W(j)\geq\frac{p}{1-p}=r_N(j)\,,
$$
where $p=\mathbb{P}(W=0)$ and $N\sim\mbox{Geom}(p)$.  That is, if $W\leq_{hr}N$ then $V+1\leq_{st}W+1$ and the bound of Theorem \ref{thm:main} holds with $X=1$ almost surely.  A similar argument shows that if $N\leq_{hr}W$ then $W+1\leq_{st}V+1$ and we obtain an analogous geometric approximation result.  In fact, we have the following.
\begin{thm}\label{cor:hr}
Let $W$ be a nonnegative, integer-valued random variable with $p=\mathbb{P}(W=0)$.  Let $N\sim\mbox{Geom}(p)$ and suppose that either $W\leq_{hr}N$ or $N\leq_{hr}W$.  Then
$$d_{TV}(\mathcal{L}(W),\mathcal{L}(N))\leq\left|1-p(1+\mathbb{E}W)\right|\,.$$
\end{thm}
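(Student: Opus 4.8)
The plan is to split the argument according to which of the two hazard rate orderings holds, and in each case to reduce to the method of Section~\ref{sec:proof} with $X=1$ almost surely. In that situation $Y=\sum_{i=1}^N X_i$ is itself $\mbox{Geom}(p)$, so $Y=_{st}N$, $\mathbb{E}Y=(1-p)/p$, $H_p(1)=p$, and $d_{TV}(\mathcal{L}(N),\mathcal{L}(N+1))=p$, so that Lemma~\ref{lem:stein} gives $\sup_j|f_A(j+1)-f_A(j)|\leq1$ for the solution $f_A$ of (\ref{eq:stein}). I shall also use that a $\mbox{Geom}(p)$ variable has failure rate equal to $p/(1-p)$ at every point, and that the hazard rate order implies the usual stochastic order (\cite[Theorem 1.B.1]{ss07}) and hence a comparison of means. (One may assume $p\in(0,1)$: for $p=1$ both $W$ and $N$ are degenerate at $0$ and the inequality is the trivial equality $0=0$, while $p=0$ does not give a proper geometric law.)

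For the case $W\leq_{hr}N$, the definition of $\leq_{hr}$ states precisely that $r_W(j)\geq p/(1-p)$ for all $j$, so Theorem~\ref{thm:main} applies with $\delta=p/(1-p)$; the moment condition $\mathbb{E}X\geq p/((1-p)\delta)=1$ holds with equality for $X\equiv1$, and the conclusion of that theorem reads $d_{TV}(\mathcal{L}(W),\mathcal{L}(N))\leq p\bigl((1-p)/p-\mathbb{E}W\bigr)=1-p(1+\mathbb{E}W)$. Since $W\leq_{hr}N$ forces $W\leq_{st}N$ and hence $\mathbb{E}W\leq\mathbb{E}N=(1-p)/p$, this right-hand side is nonnegative and equals $|1-p(1+\mathbb{E}W)|$, as required.

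For the case $N\leq_{hr}W$ we have instead $r_W(j)\leq p/(1-p)$ for all $j$, so Theorem~\ref{thm:main} no longer applies directly; instead I would repeat the computation of Section~\ref{sec:proof} with $X\equiv1$ and the relevant stochastic ordering reversed. Let $V$ satisfy $V+1=_{st}(W|W>0)$ and let $f_A$ solve (\ref{eq:stein}). Running the derivation of (\ref{eq:ord1})--(\ref{eq:ord2}) with all inequalities reversed shows that $r_W(j)\leq p/(1-p)$ for all $j$ is equivalent to $(1-p)\mathbb{P}(W+1>j)\leq\mathbb{P}(W>j)$ for all $j$, i.e.\ to $W+1\leq_{st}V+1$; this is the reversed form of Lemma~\ref{lem:ord1} sketched immediately before the statement. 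Substituting $W+1\leq_{st}V+1$ into the identity (\ref{eq:ord3}) (with $Y=N$) and bounding $|f_A(j+1)-f_A(j)|\leq1$, the summands are now of a single sign, so
$$
\bigl|\mathbb{P}(W\in A)-\mathbb{P}(N\in A)\bigr|\leq(1-p)\sum_{j=0}^{\infty}\bigl(\mathbb{P}(V+1>j)-\mathbb{P}(W+1>j)\bigr)=(1-p)\bigl(\mathbb{E}V-\mathbb{E}W\bigr)\,.
$$
Using $\mathbb{E}(V+1)=\mathbb{E}(W|W>0)=\mathbb{E}W/(1-p)$, the last expression equals $p(1+\mathbb{E}W)-1$, and since $N\leq_{hr}W$ gives $\mathbb{E}W\geq\mathbb{E}N=(1-p)/p$ this is nonnegative, hence equal to $|1-p(1+\mathbb{E}W)|$; taking the supremum over $A\subseteq\mathbb{Z}^+$ completes the case. (If $\mathbb{E}W=\infty$ the claimed bound is vacuous.)

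All of the manipulations are elementary; the only delicate point I anticipate is the second case, where one must check both that reversing the hazard rate inequality reverses the stochastic order of $V+1$ and $W+1$ in exactly the way that makes the terms of (\ref{eq:ord3}) of a single sign, and that the resulting constant $p(1+\mathbb{E}W)-1$ is precisely the negative of the constant $1-p(1+\mathbb{E}W)$ arising in the first case, so that one absolute value covers both.
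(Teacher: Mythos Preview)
Your proposal is correct and follows essentially the same approach as the paper: both split into the two hazard rate cases, both specialise the Stein's method machinery of Section~\ref{sec:proof} to $X\equiv1$, and both observe that $N\leq_{hr}W$ reverses the stochastic ordering between $V+1$ and $W+1$ so that the argument goes through with the sign flipped. The paper presents this only as a brief sketch in the paragraph preceding the theorem, whereas you have written out the computations in full detail, including the explicit verification that $(1-p)(\mathbb{E}V-\mathbb{E}W)=p(1+\mathbb{E}W)-1$ and the nonnegativity checks.
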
  
To illustrate this result, we return to the setting of Section \ref{subsec:pp}, and let $\{N(t):t\geq0\}$ be a Poisson process of rate $\lambda$ and $T$ be a nonnegative random variable independent of $\{N(t):t\geq0\}$.  We have the following.
\begin{cor}
Let $\{N(t):t\geq0\}$ and $T$ be as above.  Let $p=\mathbb{E}e^{-\lambda T}$ and $\mu=\lambda p(1-p)^{-1}$.  Let $\eta\sim\mbox{Exp}(\mu)$ have an exponential distribution with mean $\mu^{-1}$ and suppose that either $T\leq_{hr}\eta$ or $\eta\leq_{hr}T$.  Then
$d_{TV}(\mathcal{L}(N(T)),\mbox{Geom}(p))\leq\lambda p\left|\mu^{-1}-\mathbb{E}T\right|$.
\end{cor}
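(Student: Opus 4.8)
The plan is to apply Theorem \ref{cor:hr} with $W=N(T)$ and with the approximating geometric distribution realised as $N(\eta)$. First I would record the elementary facts needed: conditioning on $T$ gives $\mathbb{P}(N(T)=0)=\mathbb{E}e^{-\lambda T}=p$ and $\mathbb{E}N(T)=\lambda\mathbb{E}T$, so $N(T)$ has exactly the mass at $0$ required in Theorem \ref{cor:hr} with the stated $p$. It then remains to (i) identify $\mathcal{L}(N(\eta))$, (ii) transfer the hazard rate ordering from the pair $T,\eta$ to the pair $N(T),N(\eta)$, and (iii) rewrite the bound $|1-p(1+\mathbb{E}W)|$ in the stated form.

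For (i), a direct computation gives, for $k\in\mathbb{Z}^+$,
$$
\mathbb{P}(N(\eta)=k)=\int_0^\infty e^{-\lambda t}\frac{(\lambda t)^k}{k!}\mu e^{-\mu t}\,dt=\frac{\mu}{\lambda+\mu}\left(\frac{\lambda}{\lambda+\mu}\right)^k\,,
$$
so that $N(\eta)\sim\mbox{Geom}\left(\mu(\lambda+\mu)^{-1}\right)$; and since $\mu=\lambda p(1-p)^{-1}$ rearranges to $\lambda+\mu=\lambda(1-p)^{-1}$, we obtain $\mu(\lambda+\mu)^{-1}=p$. Hence $N(\eta)\sim\mbox{Geom}(p)$, precisely the distribution appearing in Theorem \ref{cor:hr}.

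For (ii), the key observation is that the Poisson transition probabilities $t\mapsto e^{-\lambda t}(\lambda t)^k/k!$ are $\mbox{TP}_2$ in $(k,t)$ (the factor $t^k$ is $\mbox{TP}_2$, and the remaining factor is a product of a function of $k$ and a function of $t$), so that the random time change $T\mapsto N(T)$ preserves the hazard rate order; see \cite{rsz05}, and also \cite{ss07}. Thus $T\leq_{hr}\eta$ yields $N(T)\leq_{hr}N(\eta)=\mbox{Geom}(p)$, while $\eta\leq_{hr}T$ yields $\mbox{Geom}(p)=N(\eta)\leq_{hr}N(T)$; in either case the hypothesis of Theorem \ref{cor:hr} is satisfied. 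I expect this to be the main obstacle: the hazard rate order is not preserved under arbitrary mixtures, and it is precisely the $\mbox{TP}_2$ structure of the Poisson kernel that is doing the work, so some care is needed in invoking the preservation result in this continuous-to-discrete setting.

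Finally, for (iii), Theorem \ref{cor:hr} gives $d_{TV}(\mathcal{L}(N(T)),\mbox{Geom}(p))\leq|1-p(1+\lambda\mathbb{E}T)|$ since $\mathbb{E}W=\lambda\mathbb{E}T$. Using $1-p=\lambda p\mu^{-1}$ (again from $\mu=\lambda p(1-p)^{-1}$) we have $1-p(1+\lambda\mathbb{E}T)=(1-p)-p\lambda\mathbb{E}T=\lambda p\mu^{-1}-p\lambda\mathbb{E}T=\lambda p(\mu^{-1}-\mathbb{E}T)$, and taking absolute values gives the claimed bound $\lambda p|\mu^{-1}-\mathbb{E}T|$.
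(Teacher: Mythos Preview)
Your proposal is correct and follows essentially the same route as the paper. Both arguments identify $N(\eta)\sim\mbox{Geom}(p)$, transfer the hazard rate ordering from $(T,\eta)$ to $(N(T),N(\eta))$ via the monotonicity of the Poisson family, and then invoke Theorem~\ref{cor:hr}; the only cosmetic difference is that the paper phrases the key preservation step as verifying $Z_\alpha\leq_{hr}Z_\beta$ for $\alpha\leq\beta$ (via likelihood ratio order and Theorems~1.B.14 and~1.C.1 of \cite{ss07}), whereas you phrase the same fact as the $\mbox{TP}_2$ property of the Poisson kernel.
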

\begin{proof}
We note that $p=\mathbb{P}(N(T)=0)$, $\mathbb{E}N(T)=\lambda\mathbb{E}T$, and that $N(\eta)\sim\mbox{Geom}(p)$.  The bound follows from Theorem \ref{cor:hr} if either $N(T)\leq_{hr}N(\eta)$ or $N(\eta)\leq_{hr}N(T)$.

Consider first the inequality $N(T)\leq_{hr}N(\eta)$.  Using Theorem 1.B.14 of \cite{ss07}, this holds if $T\leq_{hr}\eta$.   To see this, we need to verify that if $Z_\alpha\sim\mbox{Po}(\alpha)$ has a Poisson distribution with mean $\alpha$ then $Z_\alpha\leq_{hr}Z_\beta$ whenever $\alpha\leq\beta$.  This is most easily checked by noting that $\mathbb{P}(Z_\beta=j)\mathbb{P}(Z_\alpha=j)^{-1}$ is increasing in $j$, and then using Theorem 1.C.1 of \cite{ss07} to get the required hazard rate ordering.

Similarly, if $\eta\leq_{hr}T$ then $N(\eta)\leq_{hr}N(T)$ and the stated upper bound holds. 
\end{proof}

\subsection*{Acknowledgements}
The author thanks Mark Brown for providing a preprint of his paper \cite{b15}, and an anonymous referee for pointing out useful material in the literature and suggestions which led to improvements in the results presented.

\bibliography{hr_paper_jap}

\end{document}